\documentclass[11pt]{amsart}

\usepackage[a4paper,margin=1in]{geometry}
\usepackage{amsmath,amssymb,amsthm,enumerate}
\usepackage{mathrsfs}
\usepackage{hyperref}

\newtheorem{theorem}{Theorem}
\newtheorem{proposition}{Proposition}
\newtheorem{lemma}{Lemma}

\theoremstyle{definition}
\newtheorem{problem}{Problem}

\theoremstyle{remark}
\newtheorem{remark}{Remark}
\theoremstyle{plain}

\title{Shadowing and metric expansivity on Fr\'echet spaces}
\author{Xinxing Wu}
\address{School of Mathematics and Statistics, Guizhou University of Finance and Economics,
Guiyang, Guizhou 550025, China}
\email{wuxinxing5201314@163.com}
\author{Guting Wang}
\address{School of Mathematics and Statistics, Guizhou University of Finance and Economics,
Guiyang, Guizhou 550025, China}
\email{wangguting@mail.gufe.edu.cn}
\subjclass[2020]{Primary 47A16; Secondary 46B45, 47B37, 37B05}
\keywords{Linear dynamics, Fr\'echet space, shadowing property, metric expansivity, weighted shift}
\date{\today}

\begin{document}

\maketitle

\begin{abstract}
We prove that (1) the weighted composition operator \(Tf(z)=c f(Bz)\)
on the Fr\'echet space \(H(\mathbb{C}^d)\) of entire functions
on \(\mathbb{C}^d\) (\(d\in\mathbb{N}\)) with the compact-open topology
has the shadowing
property whenever \(0<|c|<1\) and \(B\in\operatorname{GL}_d(\mathbb C)\)
has spectral radius less than \(1\); (2) every bilateral weighted forward shift
on \(\mathbb K^{\mathbb Z}\) with nonzero weights has the shadowing property;
and (3) no continuous linear operator on a countably infinite product of
nonzero finite-dimensional normed spaces is
metrically positively expansive for any compatible metric, and no
linear homeomorphism on such a product is metrically expansive for any
compatible metric.
These results answer the \(H(\mathbb C)\)-part of \cite[Problem A]{BCDFP}
and the \(\mathbb K^{\mathbb Z}\)-part of \cite[Problem B]{BCDFP}.
\end{abstract}

\section{Introduction}

Throughout the paper, let \(\mathbb{N}=\{1,2,\ldots\}\) and
\(\mathbb{N}_0=\{0,1,2,\ldots\}.\) The scalar field is
\(\mathbb{K}=\mathbb{R}\) or \(\mathbb C\). For a Fr\'echet space
\(X\), \(L(X)\) denotes the set of continuous linear operators
on \(X\), \(GL(X)\) denotes the set of linear homeomorphisms of \(X\),
equivalently the set of all \(T\in L(X)\) such that \(T^{-1}\in L(X)\),
and \(\operatorname{GL}_d(\mathbb{C})\) denotes the set of invertible
\(d\times d\) matrices with complex entries ($d\in\mathbb{N}$).
A compatible metric is a metric that induces the given
topology.

The shadowing property and expansivity are central notions in topological
dynamics \cite{AokiHiraide,Pilyugin,Walters}.  For a continuous linear
operator, shadowing requires sufficiently accurate pseudotrajectories to be
traced by exact orbits, whereas expansivity requires a compatible metric that
separates distinct orbits by some iterate.  General background on linear
dynamics is given in the monographs of Bayart and Matheron
\cite{BayartMatheron} and Grosse-Erdmann and Peris
\cite{GrosseErdmannPeris}.

In Banach spaces, shadowing and expansivity are closely connected with
generalized hyperbolicity, chain recurrence, and structural stability.
Bernardes and Messaoudi \cite{BernardesMessaoudi} proved shadowing and
structural-stability theorems for generalized hyperbolic operators.  They also
characterized weighted shifts with the shadowing property on
\(c_0(\mathbb Z)\) and \(\ell_p(\mathbb Z)\).  Bernardes and Peris
\cite{BernardesPeris} proved that shadowing and finite shadowing coincide for
Banach-space operators and obtained chain-recurrence criteria for weighted
shifts on Fr\'echet sequence spaces.  Alves et
al.~\cite{AlvesBernardesMessaoudi} related chain recurrence to average
shadowing.  Expansive linear homeomorphisms and early shadowing results for
linear maps were studied in
\cite{Eisenberg,EisenbergHedlund,Hedlund,Ombach,Mazur}.

For Fr\'echet spaces, orbit estimates for shadowing and metric expansivity are
controlled by defining seminorms rather than by a single norm.  Several results
on linear dynamics beyond Banach spaces show how dynamical criteria are
adapted to this locally convex setting.  Bonet and Peris \cite{BonetPeris}
constructed hypercyclic operators on non-normable Fr\'echet spaces.  Bonet et
al.~\cite{BonetFrerickPerisWengenroth} gave criteria for transitivity and
hypercyclicity of operators on locally convex spaces.  Grosse-Erdmann and
Peris \cite{GrosseErdmannPerisWeaklyMixing} obtained weak-mixing criteria for
operators on topological vector spaces, and Peris
\cite{PerisHypercyclicityCriterion} proved a hypercyclicity criterion for
non-metrizable topological vector spaces.  Bonet et
al.~\cite{BonetKalmesPeris} analyzed transitivity, sequential hypercyclicity,
and chaos for shift operators on non-metrizable sequence spaces.  For
operators on locally convex spaces, Bernardes et al.~\cite{BCDFP} developed
generalized hyperbolicity, stability, and expansivity and formulated the
following two existence problems.

\cite[Problem A]{BCDFP} concerns the existence of continuous linear operators with
the shadowing property on Fr\'echet spaces.

\begin{problem}[{\cite[Problem A]{BCDFP}}]\label{problem:A}
Does every Fr\'echet space (or locally convex space) support an
operator with the shadowing property?  How about the Fr\'echet
space \(H(\mathbb{C})\)?
\end{problem}

\cite[Problem B]{BCDFP} concerns metric expansivity of continuous linear operators
on Fr\'echet spaces.

\begin{problem}[{\cite[Problem B]{BCDFP}}]\label{problem:B}
Does every Fr\'echet space (or locally convex space) support an expansive
operator?  How about the Fr\'echet space \(\mathbb{K}^{\mathbb{Z}}\)?
Here expansivity is
meant in the metric sense.
\end{problem}

The present paper treats the two Fr\'echet spaces explicitly named in
Problems~\ref{problem:A} and~\ref{problem:B}: \(H(\mathbb C)\) in
Problem~\ref{problem:A} and \(\mathbb K^{\mathbb Z}\) in
Problem~\ref{problem:B}.  The arguments are expressed in terms of the defining
seminorms of the spaces involved.  On \(H(\mathbb{C}^d)\) these are the
compact-open seminorms, while on countable products they are the
finite-coordinate seminorms
\[
p_F(x)=\max_{i\in F}\|x_i\|,
\]
where \(F\) ranges over nonempty finite subsets of the index set.

The first main result gives an affirmative answer to the \(H(\mathbb C)\)-part
of Problem~\ref{problem:A}.  Section~\ref{Sec:2} proves a summability criterion
for the shadowing property and applies it to weighted composition operators on
\(H(\mathbb{C}^d)\).  Theorem~\ref{thm:HCd-shadowing} shows that, if
\(d\in\mathbb{N}\), \(0<|c|<1\), and \(B\in \operatorname{GL}_d(\mathbb{C})\)
has spectral radius less than \(1\), then
the weighted composition operator
\[
Tf(z)=c f(Bz)
\]
defines an operator in \(GL(H(\mathbb{C}^d))\) with the shadowing property.  In
particular, \(H(\mathbb C)\) supports an operator with the shadowing property.

Section~\ref{Sec:omega-shadowing} also proves a shadowing result on the
product space \(\mathbb{K}^{\mathbb{Z}}\).
Theorem~\ref{thm:omega-weighted-shift-shadowing} states that every bilateral
weighted forward shift on \(\mathbb{K}^{\mathbb{Z}}\) with nonzero weights has
the shadowing property.

For Problem~\ref{problem:B},
Theorem~\ref{thm:countable-products-no-metric-expansive} gives a negative
answer on countably infinite products.  More precisely, for every countably
infinite product \(X=\prod_{i\in I}E_i\) of nonzero
finite-dimensional normed spaces and every compatible metric on \(X\), no
operator in \(L(X)\) is metrically positively expansive and no operator in
\(GL(X)\) is metrically expansive.  The algebraic part is first proved for a
linear operator \(T_0\) on a countably infinite-dimensional vector space.  If
\(W=\operatorname{span}\{w_1,\ldots,w_s\}\), the finite sums
\(\sum_{j=1}^s a_j(T_0)w_j\), with \(a_j\in\mathbb{K}[t]\), generate the
\(\mathbb{K}[t]\)-submodule used in
Lemma~\ref{lem:bounded-orbit-functionals}.  When \(T_0\) is invertible, the
analogous sums with \(a_j\in\mathbb{K}[t,t^{-1}]\) generate the
\(\mathbb{K}[t,t^{-1}]\)-submodule used in
Lemma~\ref{lem:bounded-two-sided-orbit-functionals}.  The structure theorem for
finitely generated modules over principal ideal domains is used in these two
lemmas to obtain, respectively, a nonzero linear functional bounded on the
nonnegative iterates of \(W\), and a nonzero linear functional bounded on all
integer iterates of \(W\).
In Theorem~\ref{thm:countable-products-no-metric-expansive}, put
\[
V_F=\bigoplus_{i\in F}E_i^*\oplus \bigoplus_{i\in I\setminus F}\{0\}
\subset V:=\bigoplus_{i\in I}E_i^*
\]
for a nonempty finite set \(F\subset I\).  For \(T\in L(X)\),
Lemma~\ref{lem:bounded-orbit-functionals} is applied to \(T^*|_V\) and
\(W=V_F\).  For \(T\in GL(X)\),
Lemma~\ref{lem:bounded-two-sided-orbit-functionals} is applied to the bijective
operator \(T^*|_V\) and the same subspace \(V_F\).  The resulting functional
\(\Lambda\in V^*\) is represented by a nonzero \(x\in X\), in the sense that
\(\Lambda(\phi)=\phi(x)\) for \(\phi\in V\).  Hence, with
\(p_F(z)=\max_{i\in F}\|z_i\|\), the bounds on
\(\Lambda((T^*)^n\phi)\), \(\phi\in V_F\), give
\[
\sup_{n\in\mathbb{N}_0}p_F(T^n x)<+\infty
\quad\text{or}\quad
\sup_{n\in\mathbb{Z}}p_F(T^n x)<+\infty .
\]
A suitable scalar multiple of \(x\) violates the defining inequality for
metric positive expansivity or metric expansivity.  Applied to
\(\mathbb{K}^{\mathbb{Z}}\), this gives a negative answer to the
\(\mathbb{K}^{\mathbb{Z}}\)-part of Problem~\ref{problem:B}.

The paper is organized as follows.  Section~\ref{Sec:2} proves the shadowing
criterion and its application to \(H(\mathbb{C}^d)\).  Section~\ref{Sec:omega-shadowing}
proves shadowing for bilateral weighted shifts on \(\mathbb{K}^{\mathbb{Z}}\).
Section~\ref{Sec:3} proves the countable-product theorem excluding metric
positive expansivity for operators in \(L(X)\) and metric expansivity for
operators in \(GL(X)\).

\section[Shadowing on H(Cd)]{Shadowing on \(H(\mathbb{C}^d)\)}
\label{Sec:2}
A \textit{Fr\'{e}chet space} is a complete metrizable locally convex topological vector space.
Equivalently, it is a locally convex space whose topology is generated by a
countable family of seminorms.
Let $X$ be a Fr\'echet space and \(T\in GL(X)\). For
a neighborhood \(U\) of \(0\), a \textit{\(U\)-pseudotrajectory} of \(T\)
is a sequence \((x_n)_{n\in\mathbb{Z}}\) such that
\[
x_{n+1}-Tx_n\in U
\quad (\forall n\in\mathbb{Z}).
\]
According to
Bernardes et al.~\cite{BCDFP}, the operator \(T\in GL(X)\) has the
\textit{shadowing property}
if, for any neighborhood \(V\) of \(0\), there exists a neighborhood \(U\) of
\(0\) such that every \(U\)-pseudotrajectory \((x_n)_{n\in\mathbb{Z}}\) of
$T$ is $V$-shadowed by some $x\in X$, i.e.,
\[
x_n-T^n x\in V
\quad(\forall n\in\mathbb{Z}).
\]

\begin{proposition}\label{prop:shadowing-sufficient-condition}
Let $X$ be a Fr\'echet space and \(T\in GL(X)\).
Suppose that for each continuous seminorm \(p\) on \(X\), there exist
a continuous seminorm \(q\) on \(X\) and summable sequences
\((a_k)_{k\in\mathbb{N}}, (b_k)_{k\in\mathbb{N}}\) such that
\begin{enumerate}[{\rm (i)}]
\item
\(
p(T^k x)\le a_k q(x)
\quad(\forall k\in\mathbb{N},\ \forall x\in X);
\)
\item
For the same seminorm \(q\) and each continuous seminorm \(r\) on \(X\),
there exist \(N_r\in\mathbb{N}\) and a summable sequence
\((b_k^{(r)})_{k\ge N_r}\) such that
\[
r(T^k x)\le b_k^{(r)}q(x)
\quad(\forall k\ge N_r,\ \forall x\in X).
\]
\end{enumerate}
Then \(T\) has the shadowing property.
\end{proposition}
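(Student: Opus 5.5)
The idea is to build the shadowing orbit directly from the forward series of the errors, using (i) for the size estimate and (ii) for convergence in the Fr\'echet topology. No estimate on \(T^{-1}\) should be needed.

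\textbf{Setup.} Fix a neighborhood \(V\) of \(0\). Choose a continuous seminorm \(p\) and \(\varepsilon>0\) with \(\{p<\varepsilon\}\subset V\). Take \(q\) and \((a_k)\) from (i), and set \(A=\sum_{k\ge1}a_k<\infty\). Pick \(\delta>0\) with \(\delta(1+A)<\varepsilon\), and put
\[
U=\{x\in X:\ p(x)<\delta,\ q(x)<\delta\},
\]
which is a neighborhood of \(0\). Let \((x_n)_{n\in\mathbb Z}\) be a \(U\)-pseudotrajectory, and write \(d_n=x_{n+1}-Tx_n\in U\).

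\textbf{Convergence of the correction series.} For each \(n\in\mathbb Z\) define
\[
y_n=\sum_{k=0}^{\infty}T^k d_{n-1-k}.
\]
To see that this converges in \(X\), fix any continuous seminorm \(r\). By (ii), for \(k\ge N_r\) we have \(r(T^k d_{n-1-k})\le b^{(r)}_k q(d_{n-1-k})\le \delta\, b^{(r)}_k\), which is summable. The finitely many terms with \(k<N_r\) are just finite vectors. Hence the partial sums are Cauchy for every seminorm \(r\), and completeness of \(X\) gives convergence. This is the only place where (ii) is used: it supplies summability in every seminorm, not just in \(p\), with a bound uniform over the errors controlled by \(q\). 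I regard this step as the main point of the argument, and it explains why (ii) is phrased with the same \(q\).

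\textbf{The exact orbit and the estimate.} Since \(T\) is continuous, it commutes with the convergent series, so
\[
Ty_n+d_n=\sum_{k\ge0}T^{k+1}d_{n-1-k}+d_n=\sum_{k\ge0}T^{k}d_{n-k}=y_{n+1}.
\]
Put \(z_n=x_n-y_n\). Then
\[
z_{n+1}=x_{n+1}-y_{n+1}=(Tx_n+d_n)-(Ty_n+d_n)=Tz_n .
\]
With \(x:=z_0\), this gives \(z_n=T^nx\) for all \(n\in\mathbb Z\). For \(n<0\) one uses that \(T\) is invertible, so \(z_n=T^{-|n|}z_0\) follows from \(z_{m+1}=Tz_m\). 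Therefore \(x_n-T^nx=y_n\). By (i) and continuity of \(p\),
\[
p(y_n)\le p(d_{n-1})+\sum_{k\ge1}a_k\,q(d_{n-1-k})<\delta+\delta A<\varepsilon,
\]
so \(x_n-T^nx\in V\) for every \(n\), and \(T\) has the shadowing property.
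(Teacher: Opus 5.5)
Your proof is correct and follows essentially the same route as the paper. Both use the same neighborhood $U=\{p<\delta,\ q<\delta\}$ and the same forward series $\sum_{k\ge0}T^k e_{n-k-1}$, which is shown to converge via (ii) and estimated via (i), with invertibility used only for the negative indices. The one cosmetic difference is that you check directly that $z_n=x_n-y_n$ is an exact orbit, whereas the paper sets $x=x_0-d_0$ and identifies $x_n-T^nx$ with $d_n$ by uniqueness of solutions of $h_{n+1}=Th_n+e_n$.
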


\begin{proof}
Fix any neighborhood $V$ of \(0\).
By the definition of the topology generated by seminorms,
there exist a continuous seminorm \(p\) on \(X\) and
\(\varepsilon>0\) such that
\[
\{x\in X : p(x)<\varepsilon\}\subset V.
\]
Let \(q\) and \((a_k)_{k\in\mathbb{N}}\) be the seminorm and
summable sequence given by condition {\rm (i)} for the seminorm \(p\).  Define
\begin{equation}\label{eq:summ_a_k}
\alpha=\sum_{k=1}^{\infty}a_k .
\end{equation}
Choose \(\delta>0\) such that
\(\delta(\alpha+1)<\varepsilon\) and set
\[
U=\{x\in X:p(x)<\delta\ \text{and}\ q(x)<\delta\}.
\]
Clearly, $U$ is a neighborhood of $0$.
For each \(U\)-pseudotrajectory \((x_n)_{n\in\mathbb{Z}}\)
of \(T\), let
\begin{equation}\label{eq:def_e_n}
e_n=x_{n+1}-Tx_n \quad (\forall n\in \mathbb{Z}).
\end{equation}
Then \(e_n\in U\), and hence by the definition of $U$,
\begin{equation}\label{eq:q_en_small}
p(e_n)<\delta\quad\text{and}\quad q(e_n)<\delta
\quad(\forall n\in\mathbb{Z}).
\end{equation}

For each \(n\in\mathbb{Z}\), we claim the series
\(\sum_{k=0}^{\infty}T^k e_{n-k-1}\)
converges in $X$.

\medskip

Indeed, for any continuous seminorm \(r\) on \(X\),
by condition {\rm (ii)} applied to
\(r\) and to the seminorm \(q\) chosen from condition {\rm (i)},
there exist \(N\in\mathbb{N}\) and a summable sequence
\((b_k)_{k\ge N}\) of non-negative numbers such that
\[
r(T^k x)\le b_k\cdot q(x)
\quad(\forall k\ge N,\ \forall x\in X).
\]
For any \(M,L\in\mathbb{N}\) with \(M\ge L\ge N\), we have
\[
r\bigg(\sum_{k=L}^{M}T^k e_{n-k-1}\bigg)
\le
\sum_{k=L}^{M}b_k\cdot q(e_{n-k-1})
<
\delta\sum_{k=L}^{M}b_k .
\]
Since \((b_k)_{k\ge N}\) is summable, the preceding estimate implies that the partial sums
\[
\sum_{k=0}^{M}T^k e_{n-k-1}
\quad(\forall M\in\mathbb{N}_0)
\]
of the series
\(
\sum_{k=0}^{\infty}T^k e_{n-k-1}
\) are Cauchy with respect to \(r\).  Since \(r\) is arbitrary, these partial sums
are Cauchy for the Fr\'echet topology of \(X\).
The completeness of \(X\) implies convergence of the series
\(
\sum_{k=0}^{\infty}T^k e_{n-k-1}
\) in \(X\).

Denote their limit by
\(d_n\):
\[
d_n=\sum_{k=0}^{\infty}T^k e_{n-k-1}.
\]
Since \(n\) is arbitrary, each \(d_n\) is well defined.
For each \(n\in\mathbb{Z}\), applying condition {\rm (i)}, \eqref{eq:summ_a_k},
and \eqref{eq:q_en_small} gives, for each
\(M\in\mathbb{N}\),
\[
p\bigg(\sum_{k=0}^{M}T^k e_{n-k-1}\bigg)
\le
p(e_{n-1})+\sum_{k=1}^{M}a_k\cdot q(e_{n-k-1})
\le
\delta(1+\alpha) .
\]
Letting \(M\to\infty\) gives \(p(d_n)\le \delta(1+\alpha)<\varepsilon\)
($n\in \mathbb{Z}$).
Since \(T\) is continuous, applying \(T\) termwise to the convergent series
defining \(d_n\) gives
\[
Td_n+e_n
=
\sum_{k=1}^{\infty}T^k e_{n-k}+e_n
=d_{n+1}.
\]
Thus the sequence \((d_n)_{n\in\mathbb{Z}}\) satisfies
\[
d_{n+1}=Td_n+e_n
\quad(\forall n\in\mathbb{Z}).
\]
Let \(x=x_0-d_0\) and
\[
h_n=x_n-T^n x
\quad(\forall n\in\mathbb{Z}).
\]
Then \(h_0=d_0\), and by \eqref{eq:def_e_n}
\[
Th_n+e_n=Tx_n-T^{n+1}x+e_n=(Tx_n+e_n)-T^{n+1}x=x_{n+1}-
T^{n+1}x=h_{n+1}
\quad(\forall n\in\mathbb{Z}),
\]
i.e.,
\[
h_{n+1}=Th_n+e_n
\quad(\forall n\in\mathbb{Z}).
\]
In particular,
\(d_{n+1}=Td_n+e_n\),
\(h_{n+1}=Th_n+e_n\),
and $h_0=d_0$. Since \(T\) is
invertible, we obtain \(h_n=d_n\) for all
\(n\in\mathbb{Z}\), implying
\[
x_n-T^n x=d_n\in V
\quad(\forall n\in\mathbb{Z}).
\]
Thus, \((x_n)_{n\in\mathbb{Z}}\) is
\(V\)-shadowed by the orbit of \(x\).
Since \(V\) is arbitrary,
\(T\) has the shadowing property.
\end{proof}

Let $\Omega\subset \mathbb{C}^d$ be
a nonempty open subset of $\mathbb{C}^d$ ($d\in \mathbb{N}$)
and $H(\Omega)$ a vector space of entire functions
\(f:\Omega\to\mathbb{K}\). Endow \(H(\Omega)\) with the
compact-open topology generated by the seminorms
\[
p_K(f)=\sup_{t\in K}|f(t)|
\quad(f\in H(\Omega),\ K\subset\Omega\text{ nonempty and compact}).
\]
Then, $H(\Omega)$ is a Fr\'{e}chet space with respect to this topology.

Fix a entire homeomorphism \(\psi:\Omega\to\Omega\) and a zero-free
entire function \(\varphi:\Omega\to\mathbb{K}\).
Define $T: H(\Omega)\to H(\Omega)$ by
\begin{equation}\label{eq:weighted-composition-operator}
Tf=\varphi\cdot(f\circ\psi).
\end{equation}
Then \(T\in GL(H(\Omega))\),
with inverse given by
\[
T^{-1}f
=
(\varphi\circ\psi^{-1})^{-1}\cdot(f\circ\psi^{-1}).
\]
Meanwhile, by direct calculation, we have
\begin{equation}\label{eq:Iterated-weighted-composition-operator}
T^kf(x)=\bigg(\prod_{j=0}^{k-1}\varphi(\psi^j(x))\bigg)\cdot f(\psi^k(x))\quad
(\forall k\in \mathbb{N},\
\forall f\in H(\Omega),\ \forall x\in \Omega).
\end{equation}


\begin{proposition}\label{prop:compact-open-shadowing}
Assume that for each compact set \(K\subset\Omega\), there exist
a compact set \(K_0\subset\Omega\) and a summable sequence
\((\alpha_n)_{n\in\mathbb{N}}\) such that:
\begin{enumerate}[{\rm (a)}]
\item For each \(k\in\mathbb{N}\),
\[
\psi^k(K)\subset K_0
\text{ and }
\sup_{x\in K}\prod_{j=0}^{k-1}|\varphi(\psi^j(x))|
\le \alpha_k;
\]

\item
For each compact set \(M\subset\Omega\), there exist
\(N\in\mathbb{N}\) and a summable sequence
\((\beta_k)_{k\ge N}\) such that, for all \(k\ge N\),
\[
\psi^k(M)\subset K_0
\text{ and }
\sup_{x\in M}\prod_{j=0}^{k-1}|\varphi(\psi^j(x))|
\le \beta_k .
\]
\end{enumerate}
Then, the operator \(T\) defined
by \eqref{eq:weighted-composition-operator} has the shadowing property.
\end{proposition}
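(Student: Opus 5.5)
We will deduce this directly from Proposition~\ref{prop:shadowing-sufficient-condition}, applied to \(T\in GL(H(\Omega))\) with the compact-open seminorms \(p_K\). Since every continuous seminorm \(p\) on \(H(\Omega)\) is dominated by \(C\,p_K\) for some compact \(K\) and some constant \(C>0\), it suffices to verify conditions (i) and (ii) of that proposition for seminorms of the form \(p=p_K\). Multiplying a summable sequence by a constant keeps it summable, so the reduction from general \(p\) to \(p_K\) is harmless.

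Fix a compact set \(K\subset\Omega\). Let \(K_0\) and \((\alpha_k)\) be as in the hypothesis, and take \(q=p_{K_0}\). For \(k\in\mathbb N\), \(f\in H(\Omega)\) and \(x\in K\), formula \eqref{eq:Iterated-weighted-composition-operator} together with (a) gives
\[
|T^kf(x)|\le \prod_{j=0}^{k-1}|\varphi(\psi^j(x))|\cdot|f(\psi^k(x))|\le \alpha_k\, p_{K_0}(f),
\]
because \(\psi^k(x)\in K_0\). Taking the supremum over \(x\in K\) yields \(p_K(T^kf)\le \alpha_k q(f)\). This is condition (i) with \(a_k=\alpha_k\).

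For condition (ii) we keep the same \(q=p_{K_0}\). Let \(r\) be a continuous seminorm. As above, \(r\le C\,p_M\) for some compact \(M\) and constant \(C>0\). Hypothesis (b) for \(M\) gives \(N\) and \((\beta_k)_{k\ge N}\). The same pointwise estimate, now using \(\psi^k(M)\subset K_0\) for \(k\ge N\), gives
\[
r(T^kf)\le C\beta_k\, p_{K_0}(f)\qquad (k\ge N).
\]
So we may take \(N_r=N\) and \(b^{(r)}_k=C\beta_k\).

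Proposition~\ref{prop:shadowing-sufficient-condition} then yields the shadowing property. The only point needing care is that the compact set \(K_0\) in (b) is the same \(K_0\) as in (a), so that a single seminorm \(q\) serves both conditions; the hypothesis is arranged precisely for this, so I expect no real obstacle. Apart from that, the argument rests only on the routine domination of continuous seminorms by the \(p_K\).
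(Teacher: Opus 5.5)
Your proposal is correct and follows essentially the same route as the paper: dominate each continuous seminorm by $C\,p_K$ for a single compact $K$, take $q=p_{K_0}$ with $K_0$ coming from hypothesis (a), and use the iterate formula to verify conditions (i) and (ii) of Proposition~\ref{prop:shadowing-sufficient-condition}. Hypothesis (b) for the same $K_0$ gives (ii). Your preliminary reduction to $p=p_K$ and your observation that the shared $K_0$ lets one seminorm $q$ serve both conditions match what the paper does implicitly when it carries the constant $C$ through.
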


\begin{proof}
We verify conditions (i) and (ii) of Proposition~\ref{prop:shadowing-sufficient-condition}.

\medskip

(i) For each continuous seminorm $p$ on \(H(\Omega)\), since the compact-open seminorms
generate the topology of \(H(\Omega)\), there exist compact sets
\(K_1,\ldots,K_s\subset\Omega\) and a constant \(C>0\) such that
\[
p(f)\le C\cdot \max_{1\le j\le s}p_{K_j}(f)
\quad(\forall f\in H(\Omega)).
\]
Set \(K=K_1\cup\cdots\cup K_s\).
Then, \(K\) is a compact subset of $\Omega$ and
\begin{equation}\label{eq:p(f)-up-bound}
p(f)\le C\cdot p_K(f)
\quad(\forall f\in H(\Omega)).
\end{equation}
From Condition (a), it follows that there exist a compact
set \(K_0\subset\Omega\) and a summable sequence
\((\alpha_k)_{k\in\mathbb{N}}\) such that
\begin{equation}\label{eq:Condition-a}
\psi^k(K)\subset K_0
\text{ and }
\sup_{x\in K}\prod_{j=0}^{k-1}|\varphi(\psi^j(x))|
\le \alpha_k \quad (\forall k\in \mathbb{N}).
\end{equation}
Take a continuous seminorm \(q=p_{K_0}.\)
The inclusion $\psi^n(K)\subset K_0$ in \eqref{eq:Condition-a}
implies that
\[
p_{\psi^k(K)}(f)\le p_{K_0}(f)=q(f)
\quad(\forall f\in H(\Omega),\ \forall k\in \mathbb{N}).
\]
This, together with \eqref{eq:Iterated-weighted-composition-operator},
\eqref{eq:p(f)-up-bound}, and \eqref{eq:Condition-a}, implies that,
for all \(k\in\mathbb{N}\) and all \(f\in H(\Omega)\),
\[
p(T^k f)
\le C\cdot p_{K}(T^k f)\leq
C\cdot \sup_{x\in K}\prod_{j=0}^{k-1}|\varphi(\psi^j(x))|
\cdot p_{\psi^k(K)}(f)
\le
C\cdot\alpha_k\cdot q(f).
\]

(ii) For each continuous seminorm \(r\) on \(H(\Omega)\), since the compact-open
seminorms generate the topology of \(H(\Omega)\), there exist compact sets
\(M_1,\ldots,M_m\subset\Omega\) and \(D>0\) such that
\[
r(f)\le D\cdot \max_{1\le j\le m}p_{M_j}(f)
\quad(\forall f\in H(\Omega)).
\]
Set \(M=M_1\cup\cdots\cup M_m\).  Then \(M\) is a compact subset of \(\Omega\)
and
\begin{equation}\label{eq:r(f)-up-bound}
r(f)\le D\cdot p_M(f)
\quad(\forall f\in H(\Omega)).
\end{equation}
With the compact set \(K_0\) fixed above and the compact
set $M$, from Condition (b), it follows that there exist
\(N\in\mathbb{N}\) and a summable sequence
\((\beta_k)_{k\ge N}\) such that
\begin{equation}\label{eq:Condition-b}
\psi^k(M)\subset K_0
\text{ and }
\sup_{x\in M}\prod_{j=0}^{k-1}|\varphi(\psi^j(x))|\le\beta_k
\quad (\forall k\ge N).
\end{equation}
Meanwhile, the inclusion
\(\psi^k(M)\subset K_0\) in \eqref{eq:Condition-b} gives
\[
p_{\psi^k(M)}(f)\le p_{K_0}(f)=q(f)
\quad(\forall f\in H(\Omega),\
\forall k\geq N).
\]
This, together with \eqref{eq:r(f)-up-bound} and \eqref{eq:Condition-b},
implies that,
for all \(k\geq N\) and all \(f\in H(\Omega)\),
\[
r(T^k f)
\le D\cdot p_{M}(T^k f)
\le
D\cdot \sup_{x\in M}\prod_{j=0}^{k-1}|\varphi(\psi^j(x))|\cdot p_{\psi^k(M)}(f)
\le
D\cdot \beta_k\cdot q(f).
\]

Therefore, \(T\) has the shadowing property by
Proposition~\ref{prop:shadowing-sufficient-condition}.
\end{proof}

For \(R>0\), define
\[
\overline{B}_R=\{x\in\mathbb{C}^d:\|x\|\le R\},\quad
p_R(f)=\max_{x\in\overline B_R}|f(x)|.
\]

\begin{theorem}\label{thm:HCd-shadowing}
Let \(d\in\mathbb{N}\) and \(0<|c|<1\).  If
\(B\in \operatorname{GL}_d(\mathbb{C})\) has spectral radius less than \(1\),
then the operator $T: H(\mathbb{C}^{d})\to H(\mathbb{C}^{d})$ defined by
\[
Tf(z)=c f(Bz)
\quad(f\in H(\mathbb{C}^d))
\]
has the shadowing property.
\end{theorem}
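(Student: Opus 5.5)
Apply Proposition~\ref{prop:compact-open-shadowing} with \(\Omega=\mathbb{C}^d\), \(\psi(z)=Bz\) and the constant zero-free function \(\varphi\equiv c\). Then \(\psi\) is an entire homeomorphism of \(\mathbb{C}^d\) because \(B\in\operatorname{GL}_d(\mathbb{C})\), so \(T\in GL(H(\mathbb{C}^d))\) with \(T^{-1}f(z)=c^{-1}f(B^{-1}z)\). By \eqref{eq:Iterated-weighted-composition-operator} we have \(T^kf(z)=c^k f(B^kz)\), and the products in (a) and (b) are simply \(|c|^k\). It therefore suffices to control the sets \(\psi^k(K)=B^k(K)\).

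The key step is the uniform bound \(C_B:=\sup_{k\in\mathbb{N}_0}\|B^k\|<\infty\) in operator norm. This follows from the spectral radius assumption via Gelfand's formula \(\lim_k\|B^k\|^{1/k}=\rho(B)<1\). Indeed, there is \(k_0\) with \(\|B^k\|\le 1\) for \(k\ge k_0\), so \(C_B\le\max\{1,\|B\|,\dots,\|B^{k_0}\|\}\). In fact \(\|B^k\|\to 0\), but boundedness is all that is needed.

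Given a compact \(K\subset\mathbb{C}^d\), choose \(R>0\) with \(K\subset\overline B_R\) and set \(K_0=\overline B_{C_BR}\) and \(\alpha_k=|c|^k\). This sequence is summable since \(|c|<1\). Then \(B^k(K)\subset K_0\) for all \(k\in\mathbb{N}\) and \(\sup_{x\in K}\prod_{j=0}^{k-1}|\varphi(\psi^j(x))|=|c|^k\), which gives (a).

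For (b), take any compact \(M\) with \(M\subset\overline B_{R'}\). Since \(\|B^k\|\to0\), there is \(N\) with \(\|B^k\|R'\le C_BR\) for \(k\ge N\), so \(B^k(M)\subset K_0\) for \(k\ge N\). Taking \(\beta_k=|c|^k\) for \(k\ge N\) gives (b). Proposition~\ref{prop:compact-open-shadowing} then yields the shadowing property.

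The only point requiring care is that \(K_0\), chosen from \(K\) alone, must absorb the images \(B^k(M)\) of an arbitrary larger compact \(M\) for large \(k\). This is exactly where the contraction \(\|B^k\|\to0\) coming from \(\rho(B)<1\) is used, not merely boundedness of \(\|B^k\|\). Everything else is immediate.
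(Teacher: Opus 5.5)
Your proposal is correct and follows the paper's proof. You apply Proposition~\ref{prop:compact-open-shadowing} with $\psi(z)=Bz$, $\varphi\equiv c$ and $\alpha_k=\beta_k=|c|^k$, and use $\|B^k\|\to 0$ (from $\rho(B)<1$) both to place all $B^k(K)$ in one closed ball $K_0$ and to push $B^k(M)$ into that same ball for large $k$; the only difference is that you make the radius $C_BR$ explicit via Gelfand's formula, where the paper simply takes some $R$ bounding $\sup_n\sup_{x\in K}\|B^nx\|$.
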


\begin{proof}
We verify conditions (a) and (b) of
Proposition~\ref{prop:compact-open-shadowing} for
\[
\Omega=\mathbb{C}^d,
\quad
H(\Omega)=H(\mathbb{C}^d),
\quad
\psi(x)=Bx,
\quad
\varphi(x)=c .
\]
(a) Since the spectral radius of \(B\) is less than \(1\), the powers \(B^n\)
converge to \(0\) in the operator norm on \(\mathbb{C}^d\). Then, for each compact set
\(K\subset\Omega\), we have
\[
\sup_{n\in\mathbb{N}_0}\sup_{x\in K}\|B^nx\|
\leq \sup_{n\in\mathbb{N}_0}\sup_{x\in K}\|B^n\|\cdot \|x\|<+\infty,
\]
and thus, there exists \(R>0\) such that
\[
B^n(K)\subset \overline{B}_R
\quad (\forall n\in \mathbb{N}).
\]
This, together with \(\psi(x)=Bx\), implies
\[
\psi^n(K)=
B^n(K)\subset
\overline{B}_R
\quad (\forall n\in \mathbb{N}).
\]
For the weight products,
\[
\sup_{x\in K}\prod_{j=0}^{n-1}|\varphi(\psi^j(x))|=|c|^n
\quad(\forall n\in\mathbb{N}),
\]
and \((|c|^n)_{n\in\mathbb{N}}\) is a summable sequence.  Thus Condition
{\rm (a)} of Proposition~\ref{prop:compact-open-shadowing} holds for \(K\) with
\(K_0=\overline{B}_R\) and \(\alpha_n=|c|^n\).

\medskip

(b) For each compact subset \(M\subset \Omega\), noting that \(M\) is bounded,
since the powers \(B^n\) converge to \(0\) in operator norm, there exists \(N\in\mathbb{N}\)
such that \(B^n(M)\subset \overline{B}_R\) for all \(n\ge N\).  Hence
\[
\psi^n(M)=B^n(M)\subset \overline{B}_R
\quad(\forall n\ge N).
\]
Meanwhile, for any \(n\ge N\),
\[
\sup_{x\in M}\prod_{j=0}^{n-1}|\varphi(\psi^j(x))|=|c|^n .
\]
The sequence \((|c|^n)_{n\ge N}\) is summable.  Thus Condition {\rm (b)} of
Proposition~\ref{prop:compact-open-shadowing} holds with the same compact set
\(K_0=\overline{B}_R\) and with
\(\beta_n=|c|^n\) for any \(n\ge N\).

Therefore, \(T\) has the shadowing property by
Proposition~\ref{prop:compact-open-shadowing}.
\end{proof}

\begin{remark}
For $c, \lambda\in \mathbb{C}$ with $0<|c|,~ |\lambda|<1$, define
$T: H(\mathbb{C})\to H(\mathbb{C})$ by
\[
Tf(x)=c\cdot f(\lambda x) \quad (f\in H(\mathbb{C}), \ x\in \mathbb{C}).
\]
It follows directly from Theorem~\ref{thm:HCd-shadowing} that
$T\in GL(H(\mathbb{C}))$ has the shadowing property. This gives an
affirmative answer to the \(H(\mathbb{C})\)-part of Problem~\ref{problem:A},
and the same construction applies on
\(H(\mathbb{C}^d)\) for each \(d\in\mathbb{N}\).
\end{remark}

\section[Shadowing on omega(Z)]{Shadowing on \(\mathbb{K}^{\mathbb{Z}}\)}
\label{Sec:omega-shadowing}

Let \(\mathbb{K}^{\mathbb{Z}}\)
be endowed with the product topology. For each \(m\in\mathbb{N}_0\), define
the continuous seminorm \(p_m\) on \(\mathbb{K}^{\mathbb{Z}}\) by
\[
p_m(x)=\max_{|j|\le m}|x_j|.
\]
The product topology coincides with the locally convex topology generated
by the seminorms \((p_m)_{m\in \mathbb{N}_0}\).  A neighborhood base at
\(0\) is therefore given by
\begin{equation}\label{eq:local_base_0}
\{\{x\in\mathbb{K}^{\mathbb{Z}}:p_m(x)<\varepsilon\}:
m\in\mathbb{N}_0,\ \varepsilon>0\}.
\end{equation}

For a sequence \(w=(w_i)_{i\in\mathbb{Z}}\) of nonzero scalars, define the
\textit{bilateral weighted forward shift} \(F_w:\mathbb{K}^{\mathbb{Z}}\to\mathbb{K}^{\mathbb{Z}}\)
by
\begin{equation}\label{eq:Def-bilateral_weighted_forward_shift}
(F_w x)_j=w_{j-1}\cdot x_{j-1}\quad(j\in\mathbb{Z}).
\end{equation}
Then \(F_w\in GL(\mathbb{K}^{\mathbb{Z}})\).  Its inverse is
\[
(F_w^{-1}x)_j=\frac{x_{j+1}}{w_j}\quad(j\in\mathbb{Z}).
\]
Here, no boundedness assumption on \(w\) or \(1/w\) is required, since each coordinate
of \(F_wx\) and \(F_w^{-1}x\) is determined by a single coordinate of \(x\).
For each \(i\in\mathbb{Z}\) and each \(n\in\mathbb{N}\), define the weight product
\[
W_i(n)=w_i\cdot w_{i+1}\cdots w_{i+n-1},
\]
and \(W_i(0)=1\) (empty product).
Then, for all \(n\in\mathbb{N}_0\) and all \(j\in\mathbb{Z}\),
\begin{equation}\label{eq:Iteration_Formula}
(F_w^n x)_j=W_{j-n}(n)\cdot x_{j-n},
\quad
(F_w^{-n}x)_j=W_j(n)^{-1}\cdot x_{j+n}.
\end{equation}

In the proof of following
Theorem~\ref{thm:omega-weighted-shift-shadowing}, a
pseudotrajectory is written as \((x^{(n)})_{n\in\mathbb{Z}}\), and
\(x_j^{(n)}\) denotes the \(j\)-th coordinate of \(x^{(n)}\).

\begin{theorem}\label{thm:omega-weighted-shift-shadowing}
The bilateral weighted forward shift \(F_w\) defined by
\eqref{eq:Def-bilateral_weighted_forward_shift} with nonzero
weights has the shadowing property.
\end{theorem}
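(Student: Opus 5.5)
Rather than going through Proposition~\ref{prop:shadowing-sufficient-condition}, we build the shadowing point coordinatewise. Summability of weight products can fail for arbitrary weights, but on $\mathbb{K}^{\mathbb{Z}}$ each coordinate of an orbit depends on only one coordinate of the initial point. This leaves plenty of freedom, and we expect to get even exact matching on finitely many coordinates.

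Fix a basic neighborhood $V=\{x: p_m(x)<\varepsilon\}$ from \eqref{eq:local_base_0}. We will take $U=\{x:p_m(x)<\delta\}$ for a suitable $\delta>0$, or even show that any $U$ works for this $V$. Let $(x^{(n)})_{n\in\mathbb{Z}}$ be a $U$-pseudotrajectory. By \eqref{eq:Iteration_Formula}, the condition $x^{(n)}-F_w^n x\in V$ reads
\[
\bigl|x^{(n)}_j-W_{j-n}(n)\,x_{j-n}\bigr|<\varepsilon \quad(|j|\le m,\ n\in\mathbb{Z}),
\]
with the convention $W_i(-n)=W_{i-n}(n)^{-1}$ for negative $n$. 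Each coordinate $x_i$ of the shadowing point therefore enters only the constraints with $j-n=i$, i.e.\ the pairs $(n,j)=(j-i,j)$ for $|j|\le m$. That is at most $2m+1$ constraints per coordinate. They run along the diagonal: time $n$ and position $j$ increase together.

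To handle these constraints we use the pseudotrajectory relation $x^{(n+1)}_{j+1}=w_j x^{(n)}_j+e^{(n)}_{j+1}$, where $e^{(n)}=x^{(n+1)}-F_wx^{(n)}$. Here $|e^{(n)}_{j+1}|<\delta$ whenever $|j+1|\le m$. For fixed $i$, set $x_i=x^{(-i-m)}_{-m}/W_i(-i-m)$, i.e.\ choose $x_i$ so that the orbit matches the pseudotrajectory exactly at the entry position $j=-m$, $n=-m-i$. Then propagate along the diagonal. At step $k$ (position $j=-m+k$) the error satisfies
\[
\mathrm{err}_{k+1}=w_{-m+k}\,\mathrm{err}_k+e,\qquad |e|<\delta,
\]
where $w_{-m+k}$ is the weight at that position. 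Over at most $2m$ steps this gives
\[
|\mathrm{err}|\le \delta\sum_{k}\ \prod(\text{weights } w_{-m},\dots,w_{m})\le \delta\, C_m,
\]
where $C_m$ depends only on $w_{-m},\dots,w_m$ and $m$, not on $i$. Choosing $\delta$ with $\delta C_m<\varepsilon$ (and $\delta\le\varepsilon$) finishes the argument. We must still check the index bookkeeping, in particular that the propagation only uses the errors $e^{(n)}_{j+1}$ with $|j+1|\le m$, which are controlled by $U$.

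The main obstacle is the index bookkeeping: confirming that within the window $|j|\le m$ the diagonal uses only the fixed weights $w_{-m},\dots,w_{m-1}$ and only errors controlled by $p_m$. Once that is verified, uniformity in $i$ and $n$ is automatic, and we obtain shadowing with no assumption on the weights beyond nonvanishing (needed to define $x_i$).
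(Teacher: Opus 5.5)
Your proposal is correct and is essentially the paper's argument. Both build the shadowing point coordinatewise along the diagonals $j-n=i$, force exact agreement with the pseudotrajectory at one fixed position, and control the window $|j|\le m$ by iterating $x^{(n+1)}_{j+1}=w_jx^{(n)}_j+e^{(n)}_{j+1}$, which involves only finitely many weights and errors bounded by $p_m$. The only difference is that you anchor at $j=-m$ and propagate forward only, so your constant involves just products of $|w_{-m+1}|,\dots,|w_{m-1}|$, whereas the paper anchors at $j=0$ and handles $j<0$ by solving backward, which brings in the inverse products $|W_j(-j)|^{-1}$ in $C_m^-$. The index bookkeeping you flagged does check out: the errors used are exactly $e^{(n)}_q$ with $-m+1\le q\le m$, all controlled by $p_m$.
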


\begin{proof}
For each neighborhood $V_1$ of $0$, by \eqref{eq:local_base_0},
there exist $m\in\mathbb{N}_0$ and $\varepsilon>0$ such that
\[
V:=\{x\in\mathbb{K}^{\mathbb{Z}}:p_m(x)<\varepsilon\}\subset V_1.
\]
Let
\[
C_m^+
=
\max_{1\le j\le m}\sum_{q=1}^{j}|W_q(j-q)|,
\quad
C_m^-
=
\max_{-m\le j\le -1}
|W_j(-j)|^{-1}\cdot \sum_{q=j+1}^{0}|W_q(-q)|,
\]
and
\[
C_m=1+C_m^{+}+C_m^{-},
\quad
\xi=\frac{\varepsilon}{C_m},
\]
where the maximum over an empty set is taken to be \(0\). Choose
\[
U=\{x\in\mathbb{K}^{\mathbb{Z}}:p_m(x)<\xi\}.
\]
For each \(U\)-pseudotrajectory \((x^{(n)})_{n\in\mathbb{Z}}\)
of \(F_w\), put
\begin{equation}\label{eq:Def_e^{n}}
e^{(n)}=x^{(n+1)}-F_{w}x^{(n)}
\quad(n\in\mathbb{Z}).
\end{equation}
This, together with \eqref{eq:Def-bilateral_weighted_forward_shift}, implies
\begin{equation}\label{eq:Def_e_q^{n}}
e_q^{(n)}=x^{(n+1)}_q-(F_{w}x^{(n)})_{q}=
x^{(n+1)}_q-w_{q-1}\cdot x^{(n)}_{q-1}
\quad(n\in\mathbb{Z}).
\end{equation}
By the choice of \(U\), we have
\begin{equation}\label{eq:e_q-Upper-Bound}
|e_q^{(n)}|\leq p_{m}(x^{(n+1)}-F_{w}x^{(n)})<\xi
\quad(n\in\mathbb{Z},\ |q|\le m).
\end{equation}

Choose a sequence \(y\in\mathbb{K}^{\mathbb{Z}}\) as
\[
y_i=
\begin{cases}
W_i(-i)^{-1}\cdot x^{(-i)}_0, & i\le0,\\
W_0(i)\cdot x^{(-i)}_0, & i>0.
\end{cases}
\]
By direct calculation, we have that
\begin{itemize}
  \item For each \(n\geq 0\),
\[
(F_w^n y)_0=W_{-n}(n)\cdot y_{-n}=x^{(n)}_0;
\]
  \item For each \(n<0\),
\[
(F_w^n y)_0=W_0(-n)^{-1}\cdot y_{-n}=x^{(n)}_0;
\]
\end{itemize}
and thus
\begin{equation}\label{eq:F_w^n(y)_0}
(F_w^n y)_0=x^{(n)}_0
\quad(\forall n\in\mathbb{Z}).
\end{equation}

Fix \(n\in\mathbb{Z}\). For each $j\in \mathbb{Z}$ with \(|j|\le m\), to prove
$|(F_w^n y)_j-x^{(n)}_j|<\varepsilon$, we consider the following
three cases:

1)
If \(j>0\), then \(F_w^n y = F_w^j(F_w^{n-j}y)\).
Since \((F_w^{n-j}y)_0 = x^{(n-j)}_0\) by \eqref{eq:F_w^n(y)_0},
and since \((F_w^j x)_j = W_0(j)\cdot x_0\) for any
\(x\in \mathbb{K}^{\mathbb{Z}}\) by \eqref{eq:Iteration_Formula},
we obtain
\begin{equation}\label{eq:F_w^n(y)}
(F_w^n y)_j=W_0(j)\cdot x^{(n-j)}_0.
\end{equation}
Meanwhile, by \eqref{eq:Def_e^{n}}, we have
\begin{align*}
x^{(n)}=& F_{w}x^{(n-1)}+e^{(n-1)}\\
=&
\cdots \\
=& F_{w}^{j}x^{(n-j)}+
F_{w}^{j-1}e^{(n-j)}+\cdots +e^{(n-1)}\\
=& F_{w}^{j}x^{(n-j)}
+\sum_{q=1}^{j}F_{w}^{j-q}e^{(n-j+q-1)},
\end{align*}
and thus, by \eqref{eq:Iteration_Formula}
\[
x^{(n)}_j
=
W_0(j)\cdot x^{(n-j)}_0
+
\sum_{q=1}^{j} W_q(j-q)\cdot e_q^{(n-j+q-1)}.
\]
This, together with \eqref{eq:e_q-Upper-Bound} and \eqref{eq:F_w^n(y)},
implies
\[
\left|(F_w^n y)_j-x^{(n)}_j\right|
=\left|\sum_{q=1}^{j} W_q(j-q)\cdot e_q^{(n-j+q-1)}\right|
\leq \sum_{q=1}^{j}|W_q(j-q)|\cdot \xi
\leq C_{m}^{+}\cdot \xi
< C_{m}\cdot \xi=\varepsilon .
\]

2)
If \(j<0\), by \eqref{eq:Iteration_Formula},
\[
(F_w^{j}x)_j = W_j(-j)^{-1}\cdot x_0
\quad (\forall x\in\mathbb{K}^{\mathbb{Z}}).
\]
By \eqref{eq:F_w^n(y)_0}, taking \(x = F_w^{n-j} y\) gives
\begin{equation}\label{eq:F_w^n(y)-2}
(F_w^{n}y)_j = (F_w^{j}(F_w^{n-j} y))_j = W_j(-j)^{-1}\cdot(F_w^{n-j} y)_0
=W_j(-j)^{-1}\cdot x^{(n-j)}_0.
\end{equation}
Meanwhile, by \eqref{eq:Def_e^{n}}, we have
\begin{align*}
x^{(n-j)}=& F_{w}x^{(n-j-1)}+e^{(n-j-1)}\\
=&
\cdots \\
=& F_{w}^{-j}x^{(n)}+
F_{w}^{-j-1}e^{(n)}+\cdots +e^{(n-j-1)}\\
=& F_{w}^{-j}x^{(n)}
+\sum_{q=j+1}^{0}F_{w}^{-q}e^{(n+q-j-1)},
\end{align*}
and thus, by \eqref{eq:Iteration_Formula}
\[
x^{(n-j)}_0
=
W_j(-j)\cdot x^{(n)}_j
+
\sum_{q=j+1}^{0}W_q(-q)\cdot e_q^{(n+q-j-1)}.
\]
Then
\[
x^{(n)}_j=W_j(-j)^{-1}\cdot x^{(n-j)}_0
-
W_j(-j)^{-1}\cdot \sum_{q=j+1}^{0}W_q(-q)\cdot
e_q^{(n+q-j-1)}.
\]
This, together with \eqref{eq:e_q-Upper-Bound} and \eqref{eq:F_w^n(y)-2},
implies
\begin{align*}
|(F_w^n y)_j-x^{(n)}_j|
=& \left|W_j(-j)^{-1}\cdot \sum_{q=j+1}^{0}W_q(-q)\cdot
e_q^{(n+q-j-1)}\right|\\
\le &
|W_j(-j)|^{-1}\cdot \sum_{q=j+1}^{0}|W_q(-q)|\cdot \xi
\leq C_{m}^{-}\cdot \xi <\varepsilon .
\end{align*}

3) If \(j=0\), by \eqref{eq:F_w^n(y)_0}, then
\[
|(F_w^n y)_0-x^{(n)}_0|=0<\varepsilon.
\]

Thus, $|(F_w^n y)_j-x^{(n)}_j|<\varepsilon$ holds for all \(|j|\le m\).
By the definition of \(p_m\), we have
\[
p_m(F_w^n y-x^{(n)})<\varepsilon
\quad(\forall n\in\mathbb{Z}).
\]
Therefore, \(F_w^n y-x^{(n)}\in V\subset V_1\) for all \(n\in\mathbb{Z}\).
Hence \(F_w\) has the shadowing property.
\end{proof}

\section{Metric expansivity on countable products}
\label{Sec:3}
According to Bernardes et al.~\cite{BCDFP}, a continuous map \(T:X\to X\) on a
metric space \((X,d)\) is \textit{metrically positively expansive} if, there exists
\(\delta>0\) such that, for any \(x, y\in X\) with \(x\ne y\),
there exists \(n\in\mathbb{N}_0\) such that
\[
d(T^n x,T^n y)\ge\delta.
\]
An invertible map \(T\) is \textit{metrically expansive} if, there exists \(\delta>0\)
such that, for any \(x, y\in X\) with \(x\ne y\), there exists \(n\in\mathbb{Z}\)
such that
\[
d(T^n x,T^n y)\ge\delta.
\]

For a vector space \(X\) over \(\mathbb{K}\), let \(X^*\) be the space
of linear functionals on \(X\), i.e.,
\[
X^*:=\{\phi:X\to\mathbb{K}:\phi\ \text{is linear}\}
\]
which is called the \textit{dual space} of \(X\).
If \(T:X\to X\) is linear, its \textit{algebraic adjoint} is the linear map
\[
T^*:X^*\to X^*,\quad T^*\phi=\phi\circ T
\quad(\forall \phi\in X^*).
\]

For a family \((X_i)_{i\in I}\) of vector spaces, set
\[
\bigoplus_{i\in I}X_i=
\left\{(x_i)_{i\in I}\in\prod_{i\in I}X_i:
\#\{i\in I:x_i\ne0\}<+\infty\right\},
\]
which is called the \textit{algebraic direct sum} of the family \((X_i)_{i\in I}\)
(see \cite[Sec.~10.3, Exercise~20]{DF}).

Some arguments in this section rely on the structure theory
of finitely generated modules over principal ideal domains.
For completeness, we use the standard terminology on modules from Dummit and
Foote~\cite[Chapters~10 and 12]{DF}.

Let \(R\) be a (commutative) ring with identity $1$.
An \textit{\(R\)-module} is a set $M$ together with
\begin{enumerate}
  \item[(1)] a binary operation $+$ on $M$ under which $M$ is an
  abelian group, and
  \item[(2)] an action of $R$ on $M$ (that is, a map
  $R\times M\to M$) denoted by $rm$ ($\forall r\in R,\
  \forall m\in M$), which satisfies
  \begin{enumerate}
    \item[(i)] $(r+s)m=rm+sm \quad (\forall r, s\in R, \ \forall m\in M)$;
    \item[(ii)] $(rs)m=r(sm) \quad (\forall r, s\in R, \ \forall m\in M)$;
    \item[(iii)] $r(m+n)=rm+rn \quad (\forall r\in R, \ \forall m, n\in M)$;
    \item[(iv)] $1m=m\quad (\forall m\in M)$.
  \end{enumerate}
\end{enumerate}

Let \(R\) be a ring and $M$ an \(R\)-module.
An \textit{$R$-submodule} of $M$ is a subgroup $N$ of $M$ which is
closed under the action of ring elements, i.e., $rn\in N$,
for all $r\in R$, $n\in N$.

A submodule $N$ of $M$ is \textit{finitely generated} if there exists
finite subset $A$ of $M$ such that
\[
N=\{r_1a_1+r_2a_2+\cdots+r_sa_s:
s\in\mathbb{N},\ r_1, \ldots, r_s\in R,\
a_1, \ldots, a_s\in A\}.
\]

An ideal \(I\) of a ring \(R\) is \textit{principal} if \(I=Ra\) for some \(a\in R\).
A commutative ring with identity $1\neq 0$ is an \textit{integral domain}
if it has  no zero divisors. A \textit{Principal Ideal
Domain} (P.I.D.) is an integral domain in which every ideal is principal.  The
following Fundamental Theorem states that every finitely generated module
over a P.I.D. is isomorphic to the direct sum of finitely many cyclic modules.

\begin{lemma}
[{\textrm{\protect\cite[Chap.~12, Theorem~5]{DF}}}]
\label{thm:PID-module-structure}
Let \(R\) be a P.I.D. and \(M\) a finitely generated \(R\)-module.
Then
\[
M \;\cong\; R^{r}\oplus\bigoplus_{i=1}^{m}R\big/(a_i),
\]
for some \(r,m\in\mathbb{N}_0\) and nonzero nonunit elements
\(a_1, \ldots, a_m\in R\), where \((a_i)=Ra_i\).
\end{lemma}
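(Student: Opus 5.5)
This is the classical structure theorem, so the natural plan is the standard route through presentations and a diagonalisation of the relation matrix. First, since \(M\) is finitely generated, say by \(m_1,\ldots,m_n\), the \(R\)-linear map \(\pi:R^n\to M\), \(\pi(r_1,\ldots,r_n)=\sum r_jm_j\), is surjective. Hence \(M\cong R^n/N\) with \(N=\ker\pi\). The whole proof then reduces to understanding the submodule \(N\subset R^n\) relative to a suitable basis of \(R^n\).

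The first key step is to show that every submodule \(N\) of \(R^n\) is free of rank at most \(n\). I would prove this by induction on \(n\), intersecting with the first \(n-1\) coordinates. The projection of \(N\) onto the last coordinate is an ideal of \(R\), hence principal, equal to \((a)\). If \(a=0\), then \(N\subset R^{n-1}\). Otherwise choose \(v\in N\) with last coordinate \(a\); then \(N=(N\cap R^{n-1})\oplus Rv\), the sum being direct because \(R\) is an integral domain. In particular \(N\) is finitely generated, say by \(k\le n\) elements. This yields a relation matrix \(A\in R^{n\times k}\) whose column span is \(N\).

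The main step, and the expected obstacle, is the \emph{stacked basis} (Smith normal form) statement. It asserts that there exist a basis \(y_1,\ldots,y_n\) of \(R^n\) and nonzero \(a_1\mid a_2\mid\cdots\mid a_t\) with \(N=Ra_1y_1\oplus\cdots\oplus Ra_ty_t\). Over a Euclidean domain this follows from row and column operations, but for a general P.I.D. there is no division algorithm, so I would argue by extremality instead. Consider all ideals \(\varphi(N)\) as \(\varphi\) ranges over \(\operatorname{Hom}_R(R^n,R)\). Because a P.I.D. is Noetherian, there is a maximal such ideal \(\nu(N)=(a_1)\), attained at some \(\varphi_0(y)=a_1\) with \(y\in N\). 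Using gcd's (Bézout in a P.I.D.) and maximality, one shows that \(a_1\) divides \(\psi(y)\) for every functional \(\psi\). Hence \(y=a_1y_1\) for some \(y_1\in R^n\) with \(\varphi_0(y_1)=1\). This gives \(R^n=Ry_1\oplus\ker\varphi_0\) and \(N=Ra_1y_1\oplus(N\cap\ker\varphi_0)\). Induction on the rank of \(N\), applied inside the free module \(\ker\varphi_0\), produces the remaining \(y_i,a_i\), and the same maximality argument gives \(a_1\mid a_2\). The delicate point is the divisibility argument: if \(d=\gcd(a_1,\psi(y))\), one builds a functional \(r\varphi_0+s\psi\) with value \(d\) at \(y\), and maximality forces \((d)=(a_1)\).

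Finally, with the stacked basis in hand,
\[
M\cong R^n/N\cong \bigoplus_{i=1}^{t}R/(a_i)\;\oplus\;R^{\,n-t}.
\]
Any \(a_i\) that is a unit contributes \(R/(a_i)=0\) and is discarded. The remaining \(a_i\) are nonzero nonunits, and we set \(r=n-t\). This is exactly the asserted decomposition.
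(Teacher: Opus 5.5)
Your proposal is correct and is essentially the standard proof in the source the paper cites for this lemma (Dummit--Foote, Sec.~12.1, Theorems~4--5): a surjection \(R^n\to M\), stacked bases for \(N=\ker\pi\) via a maximal element of \(\{\varphi(N):\varphi\in\operatorname{Hom}_R(R^n,R)\}\), and then reading off \(R^n/N\); the paper gives no proof beyond this citation. The only omission is the trivial case \(N=0\), which must be excluded so that \(a_1\neq 0\) and in which \(M\cong R^n\) directly; also, the divisibility chain \(a_1\mid a_2\mid\cdots\) is more than the lemma as stated requires.
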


\begin{lemma}\label{lem:laurent-polynomial-PID}
The Laurent polynomial ring \(\mathbb{K}[t,t^{-1}]\) is a P.I.D.
\end{lemma}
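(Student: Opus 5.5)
The plan is to identify \(\mathbb{K}[t,t^{-1}]\) with the localization \(S^{-1}\mathbb{K}[t]\), where \(S=\{t^n:n\in\mathbb{N}_0\}\), and then transfer the principal-ideal property from \(\mathbb{K}[t]\). The ring \(\mathbb{K}[t]\) is a Euclidean domain via the degree function, so it is a P.I.D.

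First, \(\mathbb{K}[t,t^{-1}]\) is an integral domain. It is a subring of the field \(\mathbb{K}(t)\) of rational functions, so it has no zero divisors, and \(1\neq 0\). Alternatively, every nonzero Laurent polynomial has the form \(t^{k}g(t)\) with \(k\in\mathbb{Z}\) and \(g\in\mathbb{K}[t]\), \(g(0)\neq 0\). Multiplying two such expressions gives \(t^{k+l}gh\) with \(gh\neq 0\).

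Next, let \(J\) be an ideal of \(\mathbb{K}[t,t^{-1}]\) and set \(J_0=J\cap\mathbb{K}[t]\). This is an ideal of \(\mathbb{K}[t]\), hence \(J_0=\mathbb{K}[t]g\) for some \(g\in\mathbb{K}[t]\).

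The remaining step is to show \(J=\mathbb{K}[t,t^{-1}]g\). The inclusion \(\supseteq\) is clear because \(g\in J\). For \(\subseteq\), take \(f\in J\). Choose \(N\in\mathbb{N}_0\) large enough that \(t^{N}f\in\mathbb{K}[t]\). Since \(t^N\) lies in the ring, \(t^N f\in J\), and therefore \(t^N f\in J_0\). So \(t^Nf=hg\) for some \(h\in\mathbb{K}[t]\), which gives \(f=(t^{-N}h)g\in\mathbb{K}[t,t^{-1}]g\). The zero ideal is covered by \(g=0\).

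No step is a genuine obstacle. The one point needing care is clearing denominators: every Laurent polynomial becomes an ordinary polynomial after multiplication by a suitable power of the unit \(t\).
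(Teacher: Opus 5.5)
Your proof is correct and follows essentially the same route as the paper: intersect the ideal with \(\mathbb{K}[t]\), use that \(\mathbb{K}[t]\) is a P.I.D. to get a generator, and clear denominators with a power of the unit \(t\) to show that this generator also generates \(J\). The differences are cosmetic: you get the integral-domain property by embedding into \(\mathbb{K}(t)\) rather than via lowest-order coefficients, and you handle the zero ideal uniformly with \(g=0\) rather than as a separate case.
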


\begin{proof}
(1) Let
\[
0\ne f=\sum_{k=m}^{n}a_kt^k
\text{ and }
0\ne g=\sum_{\ell=r}^{s}b_\ell t^\ell
\]
be elements of \(\mathbb{K}[t,t^{-1}]\), where \(a_m\ne0\) and \(b_r\ne0\).
The coefficient of \(t^{m+r}\) in \(fg\) is \(a_mb_r\ne0\). Hence
\(\mathbb{K}[t,t^{-1}]\) is an integral domain.

\medskip

(2) Let \(J\) be an ideal of
\(\mathbb{K}[t,t^{-1}]\). To prove that \(J\) is a principal ideal of
\(\mathbb{K}[t,t^{-1}]\), we consider the following two cases:

2-1) If \(J=\{0\}\), it is clear that \(J\) is principal.

2-2) If \(J\ne\{0\}\), since \(J\) is an ideal of
\(\mathbb{K}[t,t^{-1}]\), then \(I:=J\cap\mathbb{K}[t]\) is an ideal of
\(\mathbb{K}[t]\), because
\[
\mathbb{K}[t]I
=
\mathbb{K}[t](J\cap\mathbb{K}[t])
\subset
(\mathbb{K}[t]J)\cap(\mathbb{K}[t]\mathbb{K}[t])
\subset
J\cap\mathbb{K}[t]
=I.
\]
By \(J\ne\{0\}\), choose \(h_1\in J\setminus \{0\}\). Then, there exists
\(N_1\in\mathbb{N}_0\) such that \(t^{N_1}h_1\in\mathbb{K}[t]\).  Since \(J\) is an
ideal of \(\mathbb{K}[t,t^{-1}]\), \(0\ne t^{N_1}h_1\in J\cap\mathbb{K}[t]=I\),
implying \(I\ne\{0\}\). Meanwhile, since \(\mathbb{K}[t]\) is a P.I.D.
(\cite[Sec.~8.1, Proposition and Sec.~9.2, Theorem~3]{DF}),
\begin{equation}\label{eq:I_PID}
\exists p\in\mathbb{K}[t]\setminus \{0\}
\text{  s.t. }
I=\mathbb{K}[t]p.
\end{equation}

We claim that \(J=\mathbb{K}[t,t^{-1}]p\). Indeed, from
\(p=1p\in \mathbb{K}[t]p\subset I\subset J\), it follows that
\[
\mathbb{K}[t,t^{-1}]p
\subset
\mathbb{K}[t,t^{-1}]J
\subset J.
\]
For the opposite inclusion, let \(h\in J\). Then there exists
\(N\in\mathbb{N}_0\) such that \(t^Nh\in\mathbb{K}[t]\), and thus
\[
t^Nh\in \mathbb{K}[t]\cap(\mathbb{K}[t,t^{-1}]J)
\subset
\mathbb{K}[t]\cap J
=I.
\]
This, together with \eqref{eq:I_PID}, implies
\(t^Nh=qp\) for some \(q\in\mathbb{K}[t]\). Therefore
\[
h=(t^{-N}q)p\in \mathbb{K}[t,t^{-1}]p.
\]
Hence \(J=\mathbb{K}[t,t^{-1}]p\), and \(\mathbb{K}[t,t^{-1}]\) is a
P.I.D.
\end{proof}

\begin{lemma}\label{lem:bounded-orbit-functionals}
Let \(T_0:V\to V\) be a linear operator on a countably infinite-dimensional
vector space \(V\) over \(\mathbb{K}\).  If \(W\) is a finite-dimensional linear
subspace of \(V\), then there exists a nonzero linear functional
\(\Lambda\in V^*\) such that
\[
\sup_{n\in\mathbb{N}_0}|\Lambda(T_0^n v)|<+\infty
\]
for every \(v\in W\).
\end{lemma}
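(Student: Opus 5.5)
We make \(V\) a \(\mathbb{K}[t]\)-module by letting \(t\) act as \(T_0\), and consider the submodule \(M\) generated by \(W\). If \(w_1,\ldots,w_s\) is a basis of \(W\), then
\[
M=\Big\{\textstyle\sum_{j=1}^s a_j(T_0)w_j : a_j\in\mathbb{K}[t]\Big\}
=\operatorname{span}\{T_0^n w: n\in\mathbb{N}_0,\ w\in W\}.
\]
This is the smallest \(T_0\)-invariant subspace containing \(W\), and it is a finitely generated \(\mathbb{K}[t]\)-module. Since \(\mathbb{K}[t]\) is a P.I.D., Lemma~\ref{thm:PID-module-structure} gives
\[
M\cong\mathbb{K}[t]^r\oplus\bigoplus_{i=1}^m\mathbb{K}[t]/(a_i).
\]
The plan splits into two cases according to whether \(M\) is all of \(V\), which by this decomposition amounts to whether \(r=0\).

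\textbf{Case 1: \(M\ne V\).} In particular this holds when \(r=0\): then \(M\) is a finite direct sum of the finite-dimensional spaces \(\mathbb{K}[t]/(a_i)\), so \(\dim M<\infty\), while \(\dim V=\aleph_0\). Pick a Hamel basis of \(M\), extend it to a basis of \(V\), and let \(\Lambda\) be the coordinate functional of a basis vector outside \(M\). Then \(\Lambda\ne0\) and \(\Lambda|_M=0\). Since \(T_0^n v\in M\) for every \(v\in W\) and \(n\in\mathbb{N}_0\), we get \(\Lambda(T_0^nv)=0\), and the bound is trivial.

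\textbf{Case 2: \(M=V\).} Then \(V\) is infinite-dimensional, which forces \(r\ge1\). Take the free summand \(\mathbb{K}[t]\) and define a nonzero functional on it that vanishes on all of \(t\mathbb{K}[t]\): the constant-coefficient map \(\ell(\sum c_kt^k)=c_0\). Compose \(\ell\) with the projection of \(M\) onto this summand (killing the torsion part and the other free summands) and transport it to \(V\) via the isomorphism; call the result \(\Lambda\).

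The remaining task is to bound \(\Lambda(T_0^nv)\) uniformly in \(n\) for each fixed \(v\in W\). The image of \(v\) in the chosen free summand is some polynomial \(f_v\) of degree \(d_v\). The image of \(T_0^nv\) there is \(t^nf_v\), because the projection is a module map. For \(n\ge1\) the constant coefficient of \(t^nf_v\) is \(0\), so \(\Lambda(T_0^nv)=0\) for all \(n\ge1\). Hence the only possibly nonzero value is \(\Lambda(v)\), and the supremum is finite.

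The obstacle I expect is organizing the cases cleanly and not conflating them. The case \(M\ne V\) never needs the structure theorem, and the free case \(M=V\) does need it, specifically for the existence of a module-map projection onto a free rank-one summand. Alternatively one could try to avoid the split by choosing, in every case, a functional vanishing on \(tM\) but not on \(M\). In the free case \(tM\ne M\), and in the torsion case one can use finite-dimensionality as above. I would present the two-case argument since it is more transparent.
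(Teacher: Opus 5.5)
Your proof is correct and follows essentially the paper's argument: the same $\mathbb{K}[t]$-module structure, and the same split into the case $M\ne V$ (a functional killing $M$) and the case $M=V$ (structure theorem, $r\ge1$ by infinite-dimensionality, projection onto a free summand). The only difference is that you compose with the constant-coefficient functional, so $\Lambda(T_0^nv)=0$ for $n\ge1$, whereas the paper uses evaluation at $1$, so $\Lambda(T_0^nv)=\pi(v)(1)$ is constant in $n$. One small slip: $M\ne V$ is not equivalent to $r=0$, since a free rank-one $M$ can be a proper subspace of $V$; your case analysis only uses the valid implication $M=V\Rightarrow r\ge1$, so nothing breaks.
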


\begin{proof}
We define the action of $\mathbb{K}[t]$ on
$V$ by
\[
a(t) v:=\sum_{k=0}^{n}a_k T_0^kv,
\]
for $a(t)=\sum_{k=0}^{n}a_k t^k\in \mathbb{K}[t]$
($n\in \mathbb{N}_0$). Then \(V\) is a \(\mathbb{K}[t]\)-module under this action
and the vector addition on \(V\) (see \cite[Sec.~10.1, Example: \(F\lbrack x\rbrack\)-modules]{DF}).

Since $W$ is a finite-dimensional linear subspace
of $V$, fix a basis
\(\{w_1,\ldots,w_s\}\)
of \(W\), and let
\[
M_W=\sum_{j=1}^s\mathbb{K}[t] w_j.
\]
Then $M_{W}$ is a finitely generated
\(\mathbb{K}[t]\)-submodule.
We consider the following two cases:

(1) If \(M_W\ne V\), then \(V/M_W\ne0\).
Choose a nonzero linear functional
\(\Lambda_0\in (V/M_W)^*\), and define
\(\Lambda: V\to\mathbb{K}\) by
\begin{equation}\label{eq:Lamda-Def-LP}
\Lambda(v)=\Lambda_0(v+M_W)
\quad (v\in V).
\end{equation}
Then \(\Lambda\in V^*\), and
$\Lambda\neq 0$ because $\Lambda_0\neq 0$.
For any \(v\in W\) and any \(n\in\mathbb{N}_0\), one has
\[
T_0^n v=t^n v\in M_W,
\]
which, together with \eqref{eq:Lamda-Def-LP}, implies
\[
\Lambda(T_0^n v)=
\Lambda_0(T_0^n v+M_W)
=\Lambda_0(0+M_W)
=0.
\]
Thus,
\[
\sup_{n\in\mathbb{N}_0}
|\Lambda(T_0^n v)|
=0 < +\infty \quad
(\forall v\in W).
\]

(2)
If \(M_W=V\), then
\(V=M_W=\sum_{j=1}^s\mathbb{K}[t]w_j,\)
implying that \(V\) is a finitely generated
\(\mathbb{K}[t]\)-module.
Since \(\mathbb{K}[t]\) is a P.I.D.
(\cite[Sec.~8.1, Proposition and Sec.~9.2, Theorem~3]{DF}),
applying Lemma~\ref{thm:PID-module-structure} yields
\[
V
\cong
\mathbb{K}[t]^{\,r}
\oplus
\bigoplus_{i=1}^{m}\mathbb{K}[t]\big/(a_i(t)),
\]
where \(r\in\mathbb{N}_0\) and
\(a_1(t),\ldots,a_m(t)\) are nonzero nonunit elements of \(\mathbb{K}[t]\).

We claim that \(r>0\).
Indeed, if \(r=0\), then
\(
V
\cong
\bigoplus_{i=1}^{m}\mathbb{K}[t]\big/(a_i(t)).
\)
Since $0\neq a_i(t)\in \mathbb{K}[t]$, by the division algorithm
(\cite[Sec.~9.2, Theorem~3]{DF}), for any $f(t)\in \mathbb{K}[t]$,
there exist unique $q(t), s(t)\in \mathbb{K}[t]$ such that
\[
f(t)=q(t)\cdot a_i(t)+s(t),
\text{ with }
s(t)=0\text{ or }
\mathrm{degree}(s(t))<\mathrm{degree}(a_i(t)),
\]
and thus \(f(t)+(a_i(t))=s(t)+(a_i(t))\).
Hence,
\[
\dim_{\mathbb{K}}\mathbb{K}[t]\big/(a_i(t))
\leq \mathrm{degree}(a_i(t))
<+\infty
\quad(1\le i\le m),
\]
(also see \cite[Sec. 11.1, Page~411, Example~(2)]{DF}).
Consequently,
\[
\dim_{\mathbb{K}} V
=\sum_{i=1}^{m}
\dim_{\mathbb{K}}\mathbb{K}[t]\big/(a_i(t))
<+\infty,
\]
contradicting the assumption that \(V\) is countably infinite-dimensional.
Therefore, \(r>0\).

Fix an \(\mathbb{K}[t]\)-module isomorphism
\[
\Phi:V\to (\mathbb{K}[t])^{\,r}
\oplus
\bigoplus_{i=1}^{m}\mathbb{K}[t]\big/(a_i(t)).
\]
For each \(v\in V\), write
\[
\Phi(v)=
\big(P_1(v),\ldots,P_r(v),
\overline{Q}_1(v),\ldots,\overline{Q}_m(v)\big),
\]
where \(P_j(v)\in\mathbb{K}[t]\) and
\(\overline{Q}_i(v)\in\mathbb{K}[t]\big/(a_i(t))\).
Define
\[
\pi:V\to\mathbb{K}[t],
\quad
\pi(v)=P_1(v).
\]
Since \(\Phi\) is an \(\mathbb{K}[t]\)-module isomorphism,
for $t^{n}\in \mathbb{K}[t]$ ($n\in \mathbb{N}_0$), we have
\[
\Phi(t^n v)=t^n\Phi(v),
\]
and thus the definition of \(\pi\) gives
\begin{equation}\label{eq:R-module-isomorphism}
\pi(t^n v)
=
t^n\pi(v)
\quad(\forall n\in\mathbb{N}_0,\ \forall v\in V).
\end{equation}

Define a \(\mathbb{K}\)-linear functional
\[
\sigma:\mathbb{K}[t]\to\mathbb{K},
\quad
\sigma(P)=P(1)
\quad(P\in\mathbb{K}[t]),
\]
and define
\[
\Lambda=\sigma\circ\pi.
\]
Since \(\pi\) and \(\sigma\) are \(\mathbb{K}\)-linear, \(\Lambda\in V^*\).
Since \(r>0\), the map \(\pi\) is surjective.
This, together with \(\sigma(1)=1\ne0,\)
implies \(\Lambda\ne0.\)
Moreover, for any
\(v\in W\) and any \(n\in\mathbb{N}_0\),
by \eqref{eq:R-module-isomorphism}, we get
\[
\Lambda(T_0^nv)
=\sigma(\pi(t^n v))
=\sigma(t^n\cdot \pi(v))
=\pi(v)(1).
\]
Hence,
\[
\sup_{n\in\mathbb{N}_0}
|\Lambda(T_0^nv)|
=
|\pi(v)(1)|
<
+\infty \quad (\forall v\in W).
\]
\end{proof}

For bijective linear operators, metric expansivity involves both positive and negative
iterates.  The corresponding algebraic estimate is the following
Laurent-polynomial version.

\begin{lemma}\label{lem:bounded-two-sided-orbit-functionals}
Let \(T_0:V\to V\) be a bijective linear operator on a countably
infinite-dimensional vector space \(V\) over \(\mathbb{K}\).
If \(W\) is a finite-dimensional linear subspace of
\(V\), then there exists a nonzero linear functional \(\Lambda\in V^*\) such that
\[
\sup_{n\in\mathbb{Z}}|\Lambda(T_0^n v)|<+\infty
\]
for every \(v\in W\).
\end{lemma}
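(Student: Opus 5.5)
The plan is to mirror the proof of Lemma~\ref{lem:bounded-orbit-functionals}, replacing \(\mathbb{K}[t]\) by the Laurent polynomial ring \(R:=\mathbb{K}[t,t^{-1}]\), which is a P.I.D. by Lemma~\ref{lem:laurent-polynomial-PID}. Since \(T_0\) is bijective, \(T_0^{-1}\) is linear, so \(V\) becomes an \(R\)-module via
\[
a(t)v:=\sum_{k=m}^{n}a_kT_0^kv \quad\text{for } a(t)=\sum_{k=m}^{n}a_kt^k\in R.
\]
The module axioms reduce to \(T_0^kT_0^l=T_0^{k+l}\) for all \(k,l\in\mathbb{Z}\). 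I will fix a basis \(\{w_1,\ldots,w_s\}\) of \(W\) and set \(M_W=\sum_{j=1}^sRw_j\), a finitely generated \(R\)-submodule containing \(T_0^nv\) for every \(v\in W\) and every \(n\in\mathbb{Z}\).

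\emph{Case \(M_W\ne V\).} Take a nonzero \(\Lambda_0\in(V/M_W)^*\) and put \(\Lambda(v)=\Lambda_0(v+M_W)\). Then \(\Lambda\ne0\), and \(\Lambda(T_0^nv)=0\) for all \(v\in W\) and \(n\in\mathbb{Z}\), exactly as in case (1) of the previous lemma.

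\emph{Case \(M_W=V\).} Lemma~\ref{thm:PID-module-structure} gives
\[
V\cong R^r\oplus\bigoplus_{i=1}^mR/(a_i)
\]
with nonzero nonunits \(a_i\). The main step is to show that each \(R/(a_i)\) is finite-dimensional over \(\mathbb{K}\), so that \(r>0\) by the infinite-dimensionality of \(V\). For this I will multiply \(a_i\) by a unit \(t^k\) so that \(a_i\in\mathbb{K}[t]\) with \(a_i(0)\ne0\), and set \(d=\deg a_i\). I then claim that \(1,t,\ldots,t^{d-1}\) span \(R/(a_i)\). Every element of \(R\) has the form \(t^{-N}f\) with \(f\in\mathbb{K}[t]\), so it suffices to show that \(t^{-1}\) lies in \(\mathbb{K}[t]+(a_i)\), after which one reduces modulo \(a_i\) in \(\mathbb{K}[t]\) by the division algorithm. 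Writing \(a_i=a_i(0)+tg(t)\) gives \(t^{-1}a_i=a_i(0)t^{-1}+g\), and hence \(t^{-1}\equiv -g/a_i(0)\) modulo \((a_i)\). Induction on \(N\) then handles \(t^{-N}\). Alternatively, I could note that \(R/(a_i)\cong\mathbb{K}[t]/(a_i)\) because \(t\) is invertible modulo \(a_i\), but the explicit reduction is cleaner.

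With \(r>0\), I take \(\pi\) to be the first coordinate of a fixed \(R\)-module isomorphism \(\Phi\), so that \(\pi:V\to R\) is \(\mathbb{K}\)-linear and surjective with \(\pi(T_0^nv)=t^n\pi(v)\) for all \(n\in\mathbb{Z}\). I then let \(\sigma(P)=P(1)\), which is well defined on Laurent polynomials, and set \(\Lambda=\sigma\circ\pi\). Surjectivity of \(\pi\) and \(\sigma(1)=1\) give \(\Lambda\ne0\). For \(v\in W\) and \(n\in\mathbb{Z}\) we get \(\Lambda(T_0^nv)=(t^n\pi(v))(1)=\pi(v)(1)\), which is independent of \(n\), so the supremum is finite.

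The only genuinely new point relative to Lemma~\ref{lem:bounded-orbit-functionals} is the finite-dimensionality of \(R/(a_i)\) in the Laurent setting. Evaluating at \(1\) works precisely because \(1^n=1\) for negative \(n\) as well.
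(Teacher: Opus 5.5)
Your proposal is correct and follows the paper's proof essentially step by step. The paper uses the same module structure over \(\mathbb{K}[t,t^{-1}]\), the same two cases, the same finite-dimensionality argument forcing \(r>0\), and the same functional \(\Lambda=\sigma\circ\pi\) with \(\sigma(P)=P(1)\). The only cosmetic difference is how \(t\) is inverted modulo \(a_i\): you use the explicit identity \(t^{-1}a_i=a_i(0)t^{-1}+g\), while the paper uses a B\'ezout relation \(u_i t+v_i b_i=1\), and these amount to the same thing.
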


\begin{proof}
Since \(T_0\) is bijective on $V$,
we define the action of \(\mathbb{K}[t,t^{-1}]\) on \(V\) by
\[
a(t)v:=\sum_{k=p}^{q} a_k T_0^k v,
\]
for \(a(t)=\sum_{k=p}^{q}a_k t^k\in \mathbb{K}[t,t^{-1}]\)
(\(p,q\in\mathbb{Z}\) and \(p\le q\)), where $\mathbb{K}[t, t^{-1}]$
is the Laurent polynomial ring.  Then \(V\) is an
\(\mathbb{K}[t,t^{-1}]\)-module under this action and the vector addition on \(V\).

Since
\(W\) is a finite-dimensional linear subspace of
\(V\), fix a basis \(\{w_1,\ldots,w_s\}\)
of \(W\), and let
\[
M_W = \sum_{j=1}^s\mathbb{K}[t,t^{-1}]w_j.
\]
Then $M_{W}$ is a finitely generated
\(\mathbb{K}[t, t^{-1}]\)-submodule.
We consider the following two cases:

(1)
If \(M_W\ne V\), then \(V/M_W\ne0\).
Choose a nonzero linear functional
\(\Lambda_0\in (V/M_W)^*\), and define
\(\Lambda: V\to\mathbb{K}\) by
\[
\Lambda(v)
= \Lambda_0(v+M_W)
\quad (v\in V).
\]
Then \(\Lambda\in V^*\), and
$\Lambda\neq 0$ because $\Lambda_0\neq 0$.
For any \(v\in W\) and any \(n\in\mathbb{Z}\), one has
\[
T_0^n v=t^n v\in M_W,
\]
which implies
\[
\Lambda(T_0^n v)
= \Lambda_0(T_0^n v+M_W)
= \Lambda_0(0+M_W)
= 0.
\]
Thus,
\[
\sup_{n\in\mathbb{Z}}
|\Lambda(T_0^n v)|
= 0 < +\infty
\quad (\forall\,v\in W).
\]

\medskip

(2)
If \(M_W=V\), then \(V=M_W=\sum_{j=1}^s\mathbb{K}[t,t^{-1}]w_j,\)
implying that \(V\) is a finitely generated
\(\mathbb{K}[t,t^{-1}]\)-module.
Since \(\mathbb{K}[t,t^{-1}]\) is a P.I.D. by
Lemma~\ref{lem:laurent-polynomial-PID}, applying
Lemma~\ref{thm:PID-module-structure} yields
\[
V \cong
(\mathbb{K}[t,t^{-1}])^{r}
\oplus \bigoplus_{i=1}^{m}\mathbb{K}[t,t^{-1}]\big/(a_i(t)),
\]
where
\(r, m\in\mathbb{N}_0\)
and
\(a_1(t),\ldots,a_m(t)\) are nonzero nonunit
elements of \(\mathbb{K}[t,t^{-1}]\).

\medskip

We claim that \(r>0\). Indeed, if \(r=0\), then
\(V \cong \bigoplus_{i=1}^{m}\mathbb{K}[t,t^{-1}]\big/(a_i(t)).\)
Since \(0\ne a_i(t)\in\mathbb{K}[t,t^{-1}]\), there exist
\(\ell_i\in\mathbb{Z}\) and \(b_i(t)\in\mathbb{K}[t]\setminus\{0\}\)
with \(b_i(0)\ne0\) such that
\(a_i(t)=t^{\ell_i}b_i(t),\) and thus
\((a_i(t))=(b_i(t))\).
Moreover, from \(b_i(0)\ne0\), it follows that \(t\) and \(b_i(t)\) are relatively prime
in \(\mathbb{K}[t]\), implying that there exist
\(u_i(t),v_i(t)\in\mathbb{K}[t]\) such that
\(
u_i(t)\cdot t+v_i(t)\cdot b_i(t)=1.
\)
Thus, for any $n\in \mathbb{N}$,
\begin{equation*}
(u_i(t)\cdot t)^n=(1-v_i(t)\cdot b_i(t))^n
=1+\sum_{j=1}^n\binom{n}{j}(-v_i(t)\cdot b_i(t))^{j},
\end{equation*}
implying
\begin{equation}\label{eq:prime-identy-2}
(u_i(t)\cdot t)^n-1\in (b_i(t)).
\end{equation}
For any \(f(t)\in\mathbb{K}[t,t^{-1}]\), there exist \(N\in\mathbb{N}_0\) and
\(h(t)\in\mathbb{K}[t]\) such that \(f(t)=t^{-N}\cdot h(t)\).
Together with \eqref{eq:prime-identy-2},
we have
\[
(u_i(t)^N\cdot h(t)-f(t))\cdot t^{N}=(u_i(t)^N-t^{-N})\cdot t^N\cdot h(t)
=(u_i(t)^N\cdot t^{N}-1)\cdot h(t)\in (b_i(t)),
\]
and thus
\[
u_i(t)^N\cdot h(t)-f(t)\in (b_i(t)),
\]
which implies
\[
f(t)+(b_i(t))
=u_i(t)^N\cdot h(t)+(b_i(t)).
\]

Applying the division algorithm in \(\mathbb{K}[t]\)
(\cite[Sec.~9.2, Theorem~3]{DF}) to \(u_i(t)^Nh(t)\), each coset of
\(\mathbb{K}[t,t^{-1}]\big/(b_i(t))\) has a representative
\(s(t)\in\mathbb{K}[t]\) with \(s(t)=0\) or
\(\mathrm{degree}(s(t))<\mathrm{degree}(b_i(t))\). Hence,
\[
\dim_{\mathbb{K}}\mathbb{K}[t,t^{-1}]\big/(a_i(t))
=
\dim_{\mathbb{K}}\mathbb{K}[t,t^{-1}]\big/(b_i(t))
\le\mathrm{degree}(b_i(t))
<+\infty
\quad
(1\le i\le m).
\]
Consequently,
\[
\dim_{\mathbb{K}}V = \sum_{i=1}^{m}
\dim_{\mathbb{K}}\mathbb{K}[t,t^{-1}]\big/(a_i(t))
< +\infty,
\]
contradicting the assumption that \(V\) is countably infinite-dimensional.
Therefore, \(r>0\).

Fix an \(\mathbb{K}[t,t^{-1}]\)-module isomorphism
\[
\Phi:V\to (\mathbb{K}[t,t^{-1}])^{r}
\oplus
\bigoplus_{i=1}^{m}\mathbb{K}[t,t^{-1}]\big/(a_i(t)).
\]
For each \(v\in V\), write
\[
\Phi(v)= \big(P_1(v),\ldots,P_r(v),
\overline{Q}_1(v),\ldots,\overline{Q}_m(v)\big),
\]
where \(P_j(v)\in\mathbb{K}[t,t^{-1}]\) and
\(\overline{Q}_i(v)\in\mathbb{K}[t,t^{-1}]\big/(a_i(t))\).
Define
\[
\pi:V\to\mathbb{K}[t,t^{-1}],
\quad
\pi(v)=P_1(v).
\]
Since \(\Phi\) is an \(\mathbb{K}[t,t^{-1}]\)-module isomorphism,
for $t^{n}\in \mathbb{K}[t,t^{-1}]$ ($n\in \mathbb{Z}$), we have
\[
\Phi(t^n v)=t^n\Phi(v),
\]
and thus the definition of \(\pi\) gives
\begin{equation}\label{eq:R-module-isomorphism-2}
\pi(t^n v)
=
t^n\cdot \pi(v)
\quad(\forall n\in\mathbb{Z},\ \forall v\in V).
\end{equation}

Define a \(\mathbb{K}\)-linear functional
\[
\sigma:\mathbb{K}[t,t^{-1}]\to\mathbb{K},
\quad
\sigma(P)=P(1)
\quad(P\in\mathbb{K}[t,t^{-1}]),
\]
and define
\[
\Lambda=\sigma\circ\pi.
\]
Since \(\pi\) and \(\sigma\) are \(\mathbb{K}\)-linear, \(\Lambda\in V^*\).
Since \(r>0\), the map \(\pi\) is surjective.
This, together with \(\sigma(1)=1\ne0,\)
implies \(\Lambda\ne0.\)
Moreover, for any
\(v\in W\) and any \(n\in\mathbb{Z}\),
by \eqref{eq:R-module-isomorphism-2}, we get
\[
\Lambda(T_0^nv)=
\sigma(\pi(t^n v))
=\sigma(t^n\cdot \pi(v))
=\pi(v)(1).
\]
Hence,
\[
\sup_{n\in\mathbb{Z}}
|\Lambda(T_0^nv)|
= |\pi(v)(1)| <
+\infty \quad (\forall v\in W).
\]
\end{proof}

\begin{lemma}
\label{lem:dual-of-algebraic-direct-sum}
Let \((X_i)_{i\in I}\) be a family of vector spaces over \(\mathbb K\).
Then, for each
\(\Lambda\in(\bigoplus_{i\in I}X_i)^*\), there exist
\(\Lambda_i\in X_i^*\) (\(i\in I\)) such that
\[
\Lambda((x_i)_{i\in I})
=
\sum_{i\in I}\Lambda_i(x_i)
\quad
\big(\forall (x_i)_{i\in I}\in\bigoplus_{i\in I}X_i\big),
\]
\end{lemma}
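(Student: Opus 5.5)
The plan is to obtain each \(\Lambda_i\) by restricting \(\Lambda\) to the canonical copy of \(X_i\) inside the direct sum, and then use the fact that every element of \(\bigoplus_{i\in I}X_i\) is a finite sum of such copies.

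First, for each \(i\in I\) I will introduce the canonical injection
\[
\iota_i:X_i\to\bigoplus_{j\in I}X_j,\qquad (\iota_i(x))_j=\begin{cases}x,& j=i,\\ 0,& j\ne i.\end{cases}
\]
Since only the \(i\)-th coordinate can be nonzero, \(\iota_i(x)\) indeed lies in the algebraic direct sum. The map \(\iota_i\) is clearly \(\mathbb K\)-linear, because addition and scalar multiplication are coordinatewise. I then set \(\Lambda_i=\Lambda\circ\iota_i\); as a composition of linear maps, \(\Lambda_i\in X_i^*\).

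Next, fix \(x=(x_i)_{i\in I}\in\bigoplus_{i\in I}X_i\) and let \(S=\{i\in I:x_i\neq0\}\), which is finite by definition of the direct sum. Comparing coordinates gives the finite decomposition
\[
x=\sum_{i\in S}\iota_i(x_i).
\]
Using linearity of \(\Lambda\) on this finite sum, I get \(\Lambda(x)=\sum_{i\in S}\Lambda_i(x_i)\). For \(i\notin S\) we have \(x_i=0\), hence \(\Lambda_i(x_i)=0\). Therefore \(\sum_{i\in I}\Lambda_i(x_i)\) has only finitely many nonzero terms, is well defined, and equals \(\Lambda(x)\). If \(S=\emptyset\), both sides are \(0\).

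There is no real obstacle here. The only point that needs care is reading the possibly infinite sum \(\sum_{i\in I}\Lambda_i(x_i)\) as a finite sum, since only finitely many terms are nonzero. This is precisely why the algebraic direct sum is used rather than the full product: no convergence or continuity is involved anywhere.
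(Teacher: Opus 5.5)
Your proposal is correct and follows essentially the same argument as the paper. Both define $\Lambda_i=\Lambda\circ\iota_i$ via the canonical injections, write $x=\sum_{i\in S}\iota_i(x_i)$ over the finite support, and use linearity of $\Lambda$. You also explicitly note that $\Lambda_i(x_i)=0$ off the support, which the paper leaves implicit.
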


\begin{proof}
For each \(i\in I\), define
\(\iota_i:X_i\to\bigoplus_{j\in I}X_j\) by
\[
(\iota_i x)_j=
\begin{cases}
x, & j=i,\\
0, & j\ne i.
\end{cases}
\]
Let \(\Lambda\in(\bigoplus_{i\in I}X_i)^*\), and define
\(\Lambda_i:=\Lambda\circ\iota_i\). For any
\(x=(x_i)_{i\in I}\in\bigoplus_{i\in I}X_i\), since
the set \(\mathscr{S}(x)=\{i\in I:x_i\ne0\}\) is finite,
we have
\[
x=\sum_{i\in \mathscr{S}(x)}\iota_i(x_i),
\]
and thus
\[
\Lambda(x)=\sum_{i\in \mathscr{S}(x)}\Lambda(\iota_i(x_i))
=\sum_{i\in I}\Lambda_i(x_i).
\]
\end{proof}

If \(I\) is a countably infinite set, \(X=\prod_{i\in I}E_i\) is endowed
with the product topology, each \(E_i\) is finite-dimensional, and
\(V=\bigoplus_{i\in I}E_i^*\), then
\cite[Chapter~IV, $\S$~4, Theorem~4.3]{SchaeferWolff} gives
\[
\{\phi\in X^*:\phi\text{ is continuous}\}
=
\bigoplus_{i\in I}E_i^*.
\]
Moreover, we have
\begin{itemize}
  \item If \(T\in L(X)\), then for each \(\phi\in V\), the functional
\(T^*\phi=\phi\circ T\) is continuous on \(X\), and thus
\(T^*(V)\subset V\). Hence \(T^*|_V:V\to V\) is a linear operator.
  \item If \(T\in GL(X)\), then also \((T^{-1})^*(V)\subset V\).
  Moreover, for each
\(\phi\in V\),
\[
T^*((T^{-1})^*\phi)=\phi,\quad
(T^{-1})^*(T^*\phi)=\phi.
\]
Hence \(T^*|_V:V\to V\) is a bijective
linear operator, with inverse \((T^{-1})^*|_V\).
\end{itemize}

\begin{theorem}\label{thm:countable-products-no-metric-expansive}
Let \(I\) be a countably infinite set.  For each \(i\in I\), let
\(E_i\) be a nonzero finite-dimensional normed space over
\(\mathbb{K}\).  Define
\[
X=\prod_{i\in I}E_i
\]
with the product topology.  For each compatible metric \(d\) on \(X\),
the following hold:
\begin{enumerate}[{\rm (i)}]
  \item No
operator \(T\in L(X)\) is metrically positively expansive on \((X,d)\).
  \item No
operator \(T\in GL(X)\) is metrically expansive on \((X,d)\).
\end{enumerate}
\end{theorem}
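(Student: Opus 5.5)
The plan follows the outline sketched in the introduction. Fix a compatible metric \(d\) and suppose, for contradiction, that \(T\in L(X)\) is metrically positively expansive with constant \(\delta>0\); the case \(T\in GL(X)\) with metric expansivity is handled the same way, with Lemma~\ref{lem:bounded-two-sided-orbit-functionals} in place of Lemma~\ref{lem:bounded-orbit-functionals} and \(\mathbb{Z}\) in place of \(\mathbb{N}_0\).

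\textbf{Step 1: reduce to a seminorm neighbourhood.} Since \(d\) induces the product topology, the ball \(\{z: d(z,0)<\delta\}\) contains a basic neighbourhood \(\{z: p_F(z)<\eta\}\) for some nonempty finite \(F\subset I\) and some \(\eta>0\). By linearity, \(d(T^nx,T^ny)\) is not directly \(d(T^n(x-y),0)\), because \(d\) need not be translation invariant. I will therefore use the pair \((x,y)=(\lambda x_0,0)\). Then \(d(T^n\lambda x_0,T^n0)=d(\lambda T^nx_0,0)\), so it suffices to find a nonzero \(x_0\) with \(M:=\sup_n p_F(T^nx_0)<\infty\). Choosing \(\lambda>0\) with \(\lambda M<\eta\) puts every \(\lambda T^nx_0\) in the \(\delta\)-ball. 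This contradicts expansivity for the distinct points \(\lambda x_0\ne 0\).

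\textbf{Step 2: produce \(x_0\) from the algebraic lemmas.} Let \(V=\bigoplus_{i\in I}E_i^*\). This space is countably infinite-dimensional, since \(I\) is countably infinite and each \(E_i^*\) is nonzero and finite-dimensional. By the discussion preceding the theorem, \(T_0:=T^*|_V\) maps \(V\) to \(V\), and \(T_0\) is bijective when \(T\in GL(X)\). Put \(W=V_F=\bigoplus_{i\in F}E_i^*\), which is finite-dimensional. Lemma~\ref{lem:bounded-orbit-functionals} (respectively Lemma~\ref{lem:bounded-two-sided-orbit-functionals}) then gives a nonzero \(\Lambda\in V^*\) with \(\sup_n|\Lambda(T_0^n\phi)|<\infty\) for every \(\phi\in V_F\). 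By Lemma~\ref{lem:dual-of-algebraic-direct-sum}, \(\Lambda\) is given by components \(\Lambda_i\in (E_i^*)^*\). Since \(E_i\) is finite-dimensional, each \(\Lambda_i\) is evaluation at a unique \(x_i\in E_i\). Set \(x_0=(x_i)_{i\in I}\in X\); no finiteness condition is needed because \(X\) is the full product. Then \(\Lambda(\phi)=\phi(x_0)\) for all \(\phi\in V\), and \(x_0\ne0\) because \(\Lambda\ne0\).

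\textbf{Step 3: bound the orbit.} For \(\phi\in V_F\) we have \(\Lambda(T_0^n\phi)=((T^*)^n\phi)(x_0)=\phi(T^nx_0)\). Choose a finite basis \(\phi_1,\dots,\phi_k\) of \(\bigoplus_{i\in F}E_i^*\). On the finite-dimensional space \(\prod_{i\in F}E_i\), the quantity \(\max_{i\in F}\|z_i\|\) is dominated by \(C\max_l|\phi_l(z)|\), because all norms there are equivalent. Hence \(p_F(T^nx_0)\le C\max_l\sup_n|\Lambda(T_0^n\phi_l)|<\infty\), which is exactly the bound Step 1 requires.

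The main obstacle is the bookkeeping rather than any deep estimate. First, we must ensure the identification \(\Lambda\leftrightarrow x_0\) holds in the full product and not merely in a direct sum; this works because the components \(\Lambda_i\) are unrestricted. Second, we must avoid any reliance on translation invariance of \(d\); this is handled by comparing \(\lambda x_0\) with \(0\), since \(T^n0=0\). Everything else is a direct assembly of the preceding lemmas.
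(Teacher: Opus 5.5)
Your proposal is correct and follows the paper's proof essentially step for step. Both arguments place a basic neighbourhood \(\{p_F<\eta\}\) inside the \(d\)-ball, apply Lemma~\ref{lem:bounded-orbit-functionals} (or Lemma~\ref{lem:bounded-two-sided-orbit-functionals}) to \(T^*|_V\) with \(W=V_F\), represent \(\Lambda\) by a nonzero \(x_0\in X\) via Lemma~\ref{lem:dual-of-algebraic-direct-sum} and the finite-dimensional double dual, bound \(\sup_n p_F(T^nx_0)\) by norm equivalence, and rescale. You also explicitly verify that \(V\) is countably infinite-dimensional, which the lemmas require and which the paper leaves implicit.
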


\begin{proof}
Fix a compatible metric \(d\) on \(X\). Let \(V\) be the algebraic direct sum
of $E_i^{*}$, i.e.,
\[
V=\bigoplus_{i\in I}E_i^*.
\]

(i)
Let \(T\in L(X)\). To show that \(T\) is not metrically positively
expansive, it suffices to verify that for any \(\delta>0\), there exists
\(0\ne y\in X\) such that
\[
d(T^n y,0)<\delta
\quad(\forall n\in\mathbb{N}_0).
\]

By compatibility, since \(\{x\in X:d(x,0)<\delta\}\) is an open
neighborhood of \(0\), there exists a nonempty finite set
\(F\subset I\) and
\(\xi>0\) such that
\begin{equation}\label{eq:Choice-xi}
\{x\in X:p_F(x)<\xi\}
\subset
\{x\in X:d(x,0)<\delta\},
\end{equation}
where
\(p_F(x)=\max_{i\in F}\|x_i\|.\)

Let
\[
V_F=\bigoplus_{i\in F}E_i^*\oplus
\bigoplus_{i\in I\setminus F}\{0\}\subset V.
\]
Applying Lemma~\ref{lem:bounded-orbit-functionals} to the vector space \(V\),
the linear operator \(T_0=T^*|_V\), and the finite-dimensional linear subspace
\(V_F\),
there exists a nonzero functional \(\Lambda\in V^*\) such that
\begin{equation}\label{eq:Lamda_Condition}
\sup_{n\in\mathbb{N}_0}
|\Lambda((T^*)^n\phi)|
<+\infty
\quad
(\forall\,\phi\in V_F).
\end{equation}

Since each \(E_i\) is finite-dimensional, the evaluation map
\begin{equation}\label{eq:Def-J_i}
J_i:E_i\to (E_i^*)^*,\quad
J_i(x_i)(\phi_i)=\phi_i(x_i)
\quad(x_i\in E_i,\ \phi_i\in E_i^*)
\end{equation}
is an isomorphism by \cite[Sec.~11.3, Theorem~19]{DF}.
Applying Lemma~\ref{lem:dual-of-algebraic-direct-sum} to
\(\Lambda\in V^*\), with \(X_i=E_i^*\),
it follows that there exists
\(\Lambda_i\in(E_i^*)^*\) (\(i\in I\)) such that, for any
\(\phi=(\phi_i)_{i\in I}\in V\),
\begin{equation}\label{eq:Lamda_phi-Sum}
\Lambda(\phi)=\sum_{i\in I}\Lambda_i(\phi_i).
\end{equation}
Since each \(J_i\) is surjective, for \(\Lambda_i\in(E_i^*)^*\),
there exists $\hat{x}_i\in E_i$ such that $J_i(\hat{x}_i)
=\Lambda_i$. This, together with \eqref{eq:Def-J_i}, implies
\begin{equation}\label{eq:Lamda_phi}
\Lambda_i(\phi_i)=\phi_i(\hat{x}_i)
\quad(\forall \phi_i\in E_i^*,\ \forall i\in I).
\end{equation}
Choose \(\hat{x}=(\hat{x}_i)_{i\in I}\in X\). For any
\(\phi=(\phi_i)_{i\in I}\in V\), applying \eqref{eq:Lamda_phi-Sum}
and \eqref{eq:Lamda_phi} yields
\[
\Lambda(\phi)
=\sum_{i\in I}\Lambda_i(\phi_i)
=\sum_{i\in I}\phi_i(\hat{x}_i)
=\phi(\hat{x}).
\]
Since \(\Lambda\ne0\), we have \(\hat{x}\ne0\).
Together with \eqref{eq:Lamda_Condition}
and \((T^*)^n=(T^n)^*\),
we have
\begin{equation}\label{eq:Boundedness-dual}
\sup_{n\in\mathbb{N}_0}|\phi(T^n \hat{x})|
=\sup_{n\in\mathbb{N}_0}|((T^n)^{*}\phi)(\hat{x})|
=\sup_{n\in\mathbb{N}_0}|((T^*)^n\phi)(\hat{x})|
=\sup_{n\in\mathbb{N}_0}|\Lambda((T^*)^n\phi)|
<+\infty
\quad(\forall \phi\in V_F).
\end{equation}
Fix a basis \(u_1,\ldots,u_{n_1}\) of
the finite-dimensional subspace \(\prod_{i\in F}E_i\times \prod_{i\in I\setminus F}\{0\}\).
For each \(x\in \prod_{i\in F}E_i\times \prod_{i\in I\setminus F}\{0\}\), write uniquely
\[
x=\sum_{j=1}^{n_1}\alpha_j(x)u_j
\quad(\alpha_j(x)\in\mathbb K),
\]
and define $\psi_j:
\prod_{i\in F}E_i\times \prod_{i\in I\setminus F}\{0\}\to \mathbb{K}$
($1\leq j\leq n_1$) by
\[
\psi_j(x)=\alpha_j(x).
\]
Then
\(\psi_j\in
\left(\prod_{i\in F}E_i\times \prod_{i\in I\setminus F}\{0\}\right)^*\).
Since \(x\mapsto \max_{1\le j\le n_1}|\psi_j(x)|\)
and \(p_F\) are norms on this finite-dimensional subspace, the equivalence of
norms on finite-dimensional spaces gives \(C>0\) such that
\begin{equation}\label{eq:Exist-C-psi-bound}
p_F(x)\le C\cdot\max_{1\le j\le n_1}|\psi_j(x)|
\quad\bigg(\forall x\in \prod_{i\in F}E_i\times
\prod_{i\in I\setminus F}\{0\}\bigg).
\end{equation}
Let \(\pi_F:X\to\prod_{i\in F}E_i\times \prod_{i\in I\setminus F}\{0\}\)
be the projection to the subspace
$\prod_{i\in F}E_i\times \prod_{i\in I\setminus F}\{0\}$.
Then $\psi_j\circ \pi_F\in V_F$ for \(1\leq j\leq n_1\).
Since $p_{F}(x)=p_{F}(\pi_F(x))$ for all
$x\in X$, \eqref{eq:Exist-C-psi-bound} and
\eqref{eq:Boundedness-dual} give
\begin{align*}
\sup_{n\in\mathbb{N}_0}p_F(T^n \hat{x})
= & \sup_{n\in\mathbb{N}_0}p_F(\pi_F(T^n \hat{x}))
\le C\cdot \max_{1\le j\le n_1}
\sup_{n\in\mathbb{N}_0}|\psi_j(\pi_F(T^n\hat{x}))|\\
= &~ C\cdot \max_{1\le j\le n_1}
\sup_{n\in\mathbb{N}_0}|\psi_j\circ \pi_F(T^n\hat{x})|
<+\infty.
\end{align*}
Choose
\[
\hat{y}=\frac{\xi\cdot\hat{x}}{1+\sup_{n\in\mathbb{N}_0}p_F(T^n \hat{x})}
\in X.
\]
Then \(\hat{y}\ne0\), and
\[
p_F(T^n \hat{y})=\frac{\xi}{1+\sup_{n\in\mathbb{N}_0}p_F(T^n \hat{x})}\cdot p_F(T^n \hat{x})<\xi
\quad(\forall n\in\mathbb{N}_0).
\]
Together with \eqref{eq:Choice-xi}, we have
\[
d(T^n \hat{y},0)<\delta
\quad(\forall n\in\mathbb{N}_0).
\]
Hence \(T\) is not metrically positively expansive.

\medskip

(ii) By Lemma~\ref{lem:bounded-two-sided-orbit-functionals}, similarly to
the proof of {\rm (i)}, it can be verified that each \(T\in GL(X)\) is not
metrically expansive.
\end{proof}

\begin{remark}\label{rem:finite-dimensional-product-scope}
The proof of Theorem~\ref{thm:countable-products-no-metric-expansive}
uses the finite-dimensionality of the spaces \(E_i\) in the following
finite-coordinate facts.  First, for each nonempty finite set
\(F\subset I\), the subspace
\[
V_F=\bigoplus_{i\in F}E_i^*
\oplus
\bigoplus_{i\in I\setminus F}\{0\}
\]
is finite-dimensional, which is required in
Lemmas~\ref{lem:bounded-orbit-functionals}
and~\ref{lem:bounded-two-sided-orbit-functionals}.
Second, for each \(i\in I\), the evaluation map
\[
J_i:E_i\to(E_i^*)^*,
\quad
J_i(x_i)(\phi_i)=\phi_i(x_i),
\]
is surjective.  This is used, together with
Lemma~\ref{lem:dual-of-algebraic-direct-sum},
to write each \(\Lambda\in V^*\) in the form
\[
\Lambda(\phi)=\sum_{i\in I}\phi_i(\hat{x}_i)
\quad
\big(\phi=(\phi_i)_{i\in I}\in V\big).
\]
Third, finite-dimensionality is used through the equivalence of norms on
\(\prod_{i\in F}E_i\times \prod_{i\in I\setminus F}\{0\}\), which gives
\eqref{eq:Exist-C-psi-bound}.  These are precisely the points at which
the proof does not extend directly to products of infinite-dimensional
spaces.
\end{remark}

\begin{remark}
\label{rem:omega-A-positive-B-negative}
The space \(\mathbb{K}^{\mathbb{Z}}\), endowed with the product topology,
gives different answers to Problems~\ref{problem:A}
and~\ref{problem:B}: Theorem~\ref{thm:omega-weighted-shift-shadowing}
gives linear homeomorphisms with the shadowing property, whereas
Theorem~\ref{thm:countable-products-no-metric-expansive} excludes
metric expansivity for each linear homeomorphism under
any compatible metric.
\end{remark}


\begin{thebibliography}{99}
\small
\setlength{\itemsep}{0pt}
\setlength{\parskip}{0pt}
\setlength{\parsep}{0pt}


\bibitem{AlvesBernardesMessaoudi}
F. F. Alves, N. C. Bernardes Jr. and A. Messaoudi,
\emph{Chain recurrence and average shadowing in dynamics},
Monatsh. Math. 196 (2021), 665--697.

\bibitem{AntunesMantovaniVarao}
M. B. Antunes, G. E. Mantovani and R. Var\~ao,
\emph{Chain recurrence and positive shadowing in linear dynamics},
J. Math. Anal. Appl. 506 (2022), Article 125622.

\bibitem{AokiHiraide}
N. Aoki and K. Hiraide,
\emph{Topological Theory of Dynamical Systems--Recent Advances},
North-Holland Mathematical Library, 52, North-Holland, Amsterdam, 1994.

\bibitem{BayartMatheron}
F. Bayart and E. Matheron,
\emph{Dynamics of Linear Operators},
Cambridge Tracts in Mathematics, 179, Cambridge University Press, Cambridge,
2009.

\bibitem{BermudezBonillaMartinezPeris}
T. Berm\'udez, A. Bonilla, F. Mart\'inez-Gim\'enez and A. Peris,
\emph{Li-Yorke and distributionally chaotic operators},
J. Math. Anal. Appl. 373 (2011), 83--93.

\bibitem{BernardesBonillaMullerPerisDistributional}
N. C. Bernardes Jr., A. Bonilla, V. M\"uller and A. Peris,
\emph{Distributional chaos for linear operators},
J. Funct. Anal. 265 (2013), 2143--2163.

\bibitem{BernardesBonillaMullerPerisLiYorke}
N. C. Bernardes Jr., A. Bonilla, V. M\"uller and A. Peris,
\emph{Li-Yorke chaos in linear dynamics},
Ergodic Theory Dynam. Systems 35 (2015), 1723--1745.

\bibitem{BernardesBonillaPerisMean}
N. C. Bernardes Jr., A. Bonilla and A. Peris,
\emph{Mean Li-Yorke chaos in Banach spaces},
J. Funct. Anal. 278 (2020), Article 108343.

\bibitem{BCDFP}
N. C. Bernardes Jr., B. M. Caraballo, U. B. Darji, V. V. F\'avaro and A. Peris,
\emph{Generalized hyperbolicity, stability and expansivity for operators on
locally convex spaces}, J. Funct. Anal. 288 (2025), 110696.

\bibitem{BCDMP}
N. C. Bernardes Jr., P. R. Cirilo, U. B. Darji, A. Messaoudi and E. R. Pujals,
\emph{Expansivity and shadowing in linear dynamics},
J. Math. Anal. Appl. 461 (2018), 796--816.

\bibitem{BernardesMessaoudiGrobman}
N. C. Bernardes Jr. and A. Messaoudi,
\emph{A generalized Grobman-Hartman theorem},
Proc. Amer. Math. Soc. 148 (2020), 4351--4360.

\bibitem{BernardesMessaoudi}
N. C. Bernardes Jr. and A. Messaoudi,
\emph{Shadowing and structural stability for operators},
Ergodic Theory Dynam. Systems 41 (2021), 961--980.

\bibitem{BernardesPeris}
N. C. Bernardes Jr. and A. Peris,
\emph{On shadowing and chain recurrence in linear dynamics},
Adv. Math. 441 (2024), 109539.

\bibitem{BonetFrerickPerisWengenroth}
J. Bonet, L. Frerick, A. Peris and J. Wengenroth,
\emph{Transitive and hypercyclic operators on locally convex spaces},
Bull. Lond. Math. Soc. 37 (2005), 254--264.

\bibitem{BonetKalmesPeris}
J. Bonet, T. Kalmes and A. Peris,
\emph{Dynamics of shift operators on non-metrizable sequence spaces},
Rev. Mat. Iberoam. 37 (2021), 2373--2397.

\bibitem{BonetPeris}
J. Bonet and A. Peris,
\emph{Hypercyclic operators on non-normable Fr\'echet spaces},
J. Funct. Anal. 159 (1998), 587--595.

\bibitem{CiriloGollobitPujals}
P. R. Cirilo, B. Gollobit and E. R. Pujals,
\emph{Dynamics of generalized hyperbolic linear operators},
Adv. Math. 387 (2021), Article 107830.

\bibitem{DF}
D. S. Dummit and R. M. Foote,
\emph{Abstract Algebra},
3rd edition,
John Wiley \& Sons, Hoboken, NJ, 2004.

\bibitem{Eisenberg}
M. Eisenberg,
\emph{Expansive automorphisms of finite-dimensional vector spaces},
Fund. Math. 59 (1966), 307--312.

\bibitem{EisenbergHedlund}
M. Eisenberg and J. H. Hedlund,
\emph{Expansive automorphisms of Banach spaces},
Pacific J. Math. 34 (1970), 647--656.

\bibitem{GrosseErdmannPeris}
K.-G. Grosse-Erdmann and A. Peris,
\emph{Linear Chaos},
Universitext, Springer, London, 2011.

\bibitem{GrosseErdmannPerisWeaklyMixing}
K.-G. Grosse-Erdmann and A. Peris,
\emph{Weakly mixing operators on topological vector spaces},
Rev. R. Acad. Cienc. Exactas Fis. Nat. Ser. A Mat. RACSAM 104 (2010),
413--426.

\bibitem{Hedlund}
J. H. Hedlund,
\emph{Expansive automorphisms of Banach spaces. II},
Pacific J. Math. 36 (1971), 671--675.

\bibitem{Mazur}
M. Mazur,
\emph{Hyperbolicity, expansivity and shadowing for the class of normal
operators},
Funct. Differ. Equ. 7 (2000), 147--156.

\bibitem{Ombach}
J. Ombach,
\emph{The shadowing lemma in the linear case},
Univ. Iagel. Acta Math. 31 (1994), 69--74.

\bibitem{PerisHypercyclicityCriterion}
A. Peris,
\emph{A hypercyclicity criterion for non-metrizable topological vector spaces},
Funct. Approx. Comment. Math. 59 (2018), 279--284.

\bibitem{Pilyugin}
S. Yu. Pilyugin,
\emph{Shadowing in Dynamical Systems},
Lecture Notes in Mathematics, 1706, Springer, Berlin, 1999.

\bibitem{SchaeferWolff}
H. H. Schaefer and M. P. Wolff,
\emph{Topological Vector Spaces},
2nd edition, Graduate Texts in Mathematics, Vol.~3,
Springer, New York, 1999.

\bibitem{Walters}
P. Walters,
\emph{An Introduction to Ergodic Theory},
Graduate Texts in Mathematics, 79, Springer, New York, 1982.

\end{thebibliography}
\end{document}